\newtheorem{thm}{Theorem}
\newtheorem{lem}[thm]{Lemma}
\newtheorem{cor}[thm]{Corollary}
\numberwithin{thm}{section}
\theoremstyle{definition}
\newtheorem*{mydef}{Definition}
\theoremstyle{remark}
\newtheorem{rem}{Remark}
\theoremstyle{definition}
\newcommand{\m}[1]{\mathcal{#1}}
\newcommand{\dist}{\mathrm{d}}
\newcommand\GH{\mathsf{H}}
\newcommand\Q{\mathsf{Q}}
\newcommand\PG{\mathsf{PG}}
\newcommand\sympl{\mathsf{W}}
\DeclareMathOperator\inc{\mathrm{I}}
\DeclareMathOperator\notinc{\centernot{\mathrm{I}}}
\renewcommand\le{\leqslant}
\renewcommand\ge{\geqslant}
\mathchardef\mh="2D
\title{On regular induced subgraphs of generalized polygons}
\author{John Bamberg, Anurag Bishnoi, Gordon F. Royle}
\address[J. Bamberg and G. F. Royle]{
Centre for the Mathematics of Symmetry and Computation\\
The University of Western Australia\\
Australia.}
\email{ \{John.Bamberg,Gordon.Royle\}@uwa.edu.au}
\address[A. Bishnoi]{
Department of Mathematics\\
Ghent University\\
Belgium.}
\email{anurag.2357@gmail.com}
\subjclass[2010]{Primary 05C50, 05C35, 51E12}
\keywords{cage, Moore graph, generalized polygon, $t$-good structure,  Expander Mixing Lemma}
\begin{document}
\maketitle
\begin{abstract}

The cage problem asks for the smallest number $c(k,g)$ of vertices in a $k$-regular graph of girth
$g$ and graphs meeting this bound are known as \emph{cages}. While cages are known
to exist for all integers $k \ge 2$ and $g \ge 3$, the exact value of $c(k, g)$ is known only for
some small values of $k, g$ and three infinite families where $g \in \{6, 8, 12\}$ and $k - 1$ is a
prime power. These infinite families come from the incidence graphs of generalized polygons.  Some
of the best known upper bounds on $c(k,g)$ for $g \in \{6, 8, 12\}$ have been obtained by constructing small regular induced subgraphs
of these cages.

In this paper, we first use the Expander Mixing Lemma to give a general lower bound on the size of an
induced $k$-regular subgraph of a regular bipartite graph in terms of the second largest eigenvalue
of the host graph.  We use this bound to show that the known construction of $(k,6)$-graphs using
Baer subplanes of the Desarguesian projective plane is the best possible.  For generalized
quadrangles and hexagons, our bounds are new.  In particular, we improve the known lower bound on
the size of an induced $q$-regular subgraphs of the classical generalized quadrangle $\Q(4,q)$
and show that the known constructions are asymptotically sharp, which answers a question of Metsch
\cite[Section 6]{Metsch11}.

For prime powers $q$, we also improve the known upper bounds on $c(q,8)$ and $c(q,12)$ by giving new
geometric constructions of $q$-regular induced subgraphs in the symplectic generalized quadrangle
$\sympl(3,q)$ and the split Cayley hexagon $\GH(q)$, respectively.  Our constructions show that
\[c(q,8) \le 2(q^3 - q\sqrt{q} - q)\]
for $q$ an even power  of a prime, and 
\[c(q, 12) \le 2(q^5 - 3q^3)\]
for all prime powers $q$.  For $q \in \{3,4,5\}$ we also give a computer classification of all
$q$-regular induced subgraphs of the classical generalized quadrangles of order $q$.
For $\sympl(3,7)$ we classify all $7$-regular induced subgraphs which have a non-trivial automorphism. 
\end{abstract}

\section{Introduction}
\label{sec:intro}

A $(k,g)$-graph is a simple undirected graph which is $k$-regular and has girth $g$ (length of the
shortest cycle).  The study of the cage problem begins with the observation that a $(k,g)$-graph has
at least $1 + k + k(k-1) + \dots + k(k-1)^{(g - 3)/2}$ vertices for $g$ odd and at least $2(1 + (k
-1) + (k-1)^2 + \dots + (k-1)^{(g - 2)/2})$ vertices for $g$ even.  The $(k,g)$-graphs which meet
these bounds are known as \textit{Moore graphs}.  Interestingly, there are very few Moore graphs.
It has been proved using linear algebraic methods that a Moore graph can only exist in the following
cases: (a) $k = 2$ and $g \ge 3$ (cycles), (b) $g = 3$ and $k \ge 2$ (complete graphs), (c) $g = 4$
and $k \ge 2$ (complete bipartite graphs), (d) $g = 5$ and $k \in \{2, 3, 7, 57\}$, (e) $g \in \{6,
8, 12\}$ and there exists a generalized $(g/2)$-gon of order $k - 1$ \cite{Hoffman-Singleton60,
  Feit-Higman, Bannai-Ito73, Damerell73}.  The existence of a Moore graph with $k = 57$ and $g =
5$ is a famous open problem in graph theory.  Generalized $n$-gons are certain point-line geometries
introduced by Tits \cite{Tits59}, and those with order $k - 1$ are known to exist only when $k - 1$
is a prime power and $n \in \{3, 4, 6\}$ (see Section \ref{sec:prelim} for their definition and
properties).

In view of this scarcity of Moore graphs, a natural problem is to find the minimum number of
vertices in a $k$-regular graph of girth $g$ for arbitrary integers $k \ge 3$ and $g \ge 5$.  This
minimum number is denoted by $c(k, g)$ and the graphs with $c(k, g)$ vertices are known as
\emph{cages}. The problem of determining $c(k,g)$ is then called \textit{the cage problem}.  It was
shown by Erd\H{o}s and Sachs that $c(k,g)$ is finite, that is, cages exist for every possible value of
$k$ and $g$ \cite{Erdos-Sachs63}.  Beyond the Moore graphs, $c(k, g)$ is known exactly for only a
few small cases and we refer to the survey \cite[Section 2]{Excoo-Jajkay11} for a description of
these graphs.  The general problem of determining $c(k,g)$ appears to be extremely hard.  Therefore,
much research has been devoted to obtaining good upper bounds on $c(k, g)$ by constructing small
graphs of given girth and regularity (see \cite[Section 4]{Excoo-Jajkay11} for the state of the art).
In this paper, we will be focussing on the case $g \in \{6, 8, 12\}$ when $k - 1$ is not necessarily
a prime power. For general values of $k$ and $g$, the best known upper bounds are due to Lazebnik,
Ustimenko and Woldar \cite{LUW97}.

In \cite{Brown67}, Brown initiated the idea of constructing regular induced subgraphs of known cages
(or Moore graphs) to obtain upper bounds on the number $c(k,g)$.  It was shown that $c(k, 6) < 4k^2$
for all $k$ by taking a prime $p$ satisfying $k < p < 2k$ and then constructing a $k$-regular
subgraph of the incidence graph of the projective plane $\PG(2,p)$ by removing some well chosen
points and lines of the projective plane.  
More constructions for the projective plane were then given in \cite{AFLN06} and some of the bounds were improved.  
In \cite{AGMS07}, further new constructions were given which in particular improved the bounds for $g = 12$.

To give a common treatment of these constructions, G\'{a}cs and H\'{e}ger \cite{Gacs-Heger08}
introduced the notion of a \textit{$t$-good structure} in a generalized $n$-gon, which is a
collection $\m P$ of points and a collection $ \m L$ of lines with the property that every point
outside $\m P$ is incident with exactly $t$ lines of $\m L$ and every line outside $\m L$ is
incident with exactly $t$ points of $\m P$.  Or equivalently, a $t$-good structure in a generalized
polygon $\Gamma$ of order $q$ is the point-line substructure obtained by removing the vertices of an
induced $(q + 1 - t)$-regular subgraph of the incidence graph of $\Gamma$.  Therefore, large
$t$-good structures correspond to small $(q + 1 - t)$-regular subgraphs.  G\'{a}cs and H\'{e}ger
constructed $t$-good structures with $t(q + \sqrt{q} + 1)$ points (and necessarily equally many
lines) in $\PG(2,q)$, for $q$ an even power of a prime, by taking $t$ disjoint Baer subplanes
\cite{Gacs-Heger08}.  They also showed that for all $t \le 2 \sqrt{q}$ the size of any $t$-good
structure in $\PG(2,q)$ is at most $t(q + \sqrt{q} + 1)$, thus proving that their construction is
the best possible for small enough $t$ \cite[Theorem 3.9]{Gacs-Heger08}.  In Section
\ref{sec:bounds}, we will prove that this holds true for all feasible values of $t$.  We also obtain
upper bounds for the sizes of $t$-good structures in generalized quadrangles and hexagons by proving
a general lower bound on the number of vertices in a regular induced subgraph of a regular graph,
which follows from the Expander Mixing Lemma (see Theorem \ref{thm:main_tgood}).  These bounds give
us a limit on the best upper bounds on $c(k,g)$ that can be obtained, for $g \in \{6,8,12\}$, by
this construction method.  Our bound on generalized quadrangles in particular answers a question of
Metsch \cite[Section 6]{Metsch11}.

For constructions of small $(k, g)$-graphs, we focus on $1$-good structures in generalized
quadrangles and hexagons which will allow us to obtain new upper bounds on $c(q, 8)$ and $c(q, 12)$
for prime powers $q$.  We remark that in all of our constructions it is straightforward to prove
using the Moore bound that the $q$-regular induced subgraph that we construct also has the girth of
the original graph. In fact, even if the girth were bigger, we would get an upper bound on $c(q, 8)$
and $c(q,12)$ because of the inequality $c(k, g_1) < c(k,g_2)$ for $g_1 < g_2$ \cite[Theorem
  1]{Fu-Huang-Rodger97}.

For generalized quadrangles, $1$-good structures were studied extensively by Beukemann and Metsch in
\cite{BM11}, where they gave several constructions of $1$-good structures in the classical
generalized quadrangle related to the quadric $\Q(4, q)$ in $\PG(4,q)$ and showed that classifying
such structures appears to be a difficult problem.  Their best constructions have $q^2 + 3q + 1$
points for odd $q$ and $q^2 + 4q + 3$ points for even $q$, which imply the bounds
\[
c(q, 8) \le
\begin{cases}
2(q^3 - 2q),& \text{if } q \text{ is odd}\\
2(q^3 - 3q - 2),& \text{if } q \text{ is even}.
\end{cases}
\]
In Section \ref{sec:const}, we improve the bounds on $c(q, 8)$ obtained by Beukemann and Metsch to 
\[c(q,8) \le 2(q^3 - q\sqrt{q} - q)\]
whenever $q$ is a square, by constructing $1$-good structures in the generalized quadrangle
$\sympl(3,q)$ corresponding to a symplectic form in $\PG(3,q)$ of size $q^2 + q \sqrt{q} + 2q + 1$.

For generalized hexagons of order $q$, the best known $1$-good structures have $q^4 + 2q^3 + q^2 + q
+ 1$ points in them \cite[Construction 2.3]{Gacs-Heger08}.  In Section \ref{sec:const2}, we will
give a general construction that will give us $1$-good structures in $\GH(q)$ of sizes $q^4 + q^3 +
q^2 + q + 1 + k$ for
%\[
%k \in \{0, q^3, q^3 + q^2 - q, 2q^3 - q^2, 2q^3 - q, 2q^3, 3q^3 - q^2 - q, 3q^3 - q^2, 3q^3\}.
%\]
\[
k\in \{ 0, q^3-q, q^3, q^3+q^2-q, 2q^3-q^2-q, 2q^3-q^2, 2q^3-q, 2q^3, 3q^3-q^2-q, 3q^3-q^2, 3q^3\}.
\]
While two of our examples, of sizes $q^4 + q^3 + q^2 + q + 1$ and $q^4 + 2q^3 + q^2 + q + 1$, are
known, all the other examples are new.  We note that unlike all the constructions of $t$-good
structures in generalized hexagons so far, our construction relies on the geometry of the split
Cayley generalized hexagon $\GH(q)$ represented inside the quadric $\Q(6,q)$ in $ \PG(6,q)$, and not
just on its ``combinatorial properties''.  With the best new geometric construction, we obtain the
bound
\[c(q, 12) \le 2(q^5 - 3q^3)\]
for every prime power $q$, which improves the current best upper bound of $c(q, 12) \le 2(q^5 -
q^3)$ for all prime powers $q$.

\begin{rem}
For a prime power $q$, if $q - 1$ is also a prime power then we clearly have $c(k, g)$ equal to the
Moore bound.  Therefore, the upper bounds on $c(q,g)$ for $g \in \{6, 8, 12\}$ are interesting only
when $q - 1$ is not a prime power.
\end{rem}

Finding $1$-good structures in generalized quadrangles and hexagons of small order with the help of
a computer played a big role in obtaining these new constructions.  In Section \ref{sec:computer} we
describe our computational method and give a full computer-classification of $1$-good structures in
the known generalized quadrangles of orders $4$ and $5$, and a classification of $1$-good structures
that have a non-trivial automorphism group in the known generalized quadrangle of order $7$.

\section{Preliminaries}
\label{sec:prelim}

A \textit{generalized $n$-gon}, for $n \in \mathbb{N} \setminus \{0, 1, 2\}$, of order $(s, t)$ is a
point-line geometry $\mathcal{S} = (\mathcal{P}, \mathcal{L}, \inc)$ satisfying the following
properties:
\begin{enumerate}[(1)]
\item  every point of $\mathcal{S}$ is incident with exactly $t + 1$ lines;
\item  every line of $\mathcal{S}$ is incident with exactly $s + 1$ points;
\item the incidence graph of $\mathcal{S}$ has diameter $n$ and girth $2n$. 
\end{enumerate}
These objects were introduced by Tits \cite{Tits59} and a standard reference for them is \cite{VanMaldeghem98}. 

For a generalized polygon $(\mathcal{P}, \mathcal{L}, \inc)$ we will measure the distance $\dist(x,
y)$ between $x, y \in \mathcal{P} \cup \mathcal{L}$ by the distance between the vertices
corresponding to $x$ and $y$ in the incidence graph.  Two elements of the generalized $n$-gon that
have the same type are called \textit{opposite} if they are at distance $n$ from each other.

It was proved by Feit and Higman \cite{Feit-Higman} that finite generalized $n$-gons of order
$(s,t)$ with $s,t \ge 2$ can only exist for $n \in \{3, 4, 6\}.$ These generalized $n$-gons are
known as generalized triangles, quadrangles and hexagons, respectively. When $s = t \ge 2$, the
incidence graphs of these generalized polygons give us \textit{Moore graphs}, and such generalized
polygons are only known to exist when $s = t = q$ for a prime power $q$. As mentioned in the
introduction, we will be looking at regular induced subgraphs of the incidence graphs of these
generalized polygons.

\begin{mydef}
A $t$-good structure in a generalized polygon $(\mathcal{P}, \mathcal{L}, \inc)$  is a
pair of subsets $\overline{\mathcal{P}}\subseteq \mathcal{P}$, $\overline{\mathcal{L}} \subseteq
\mathcal{L}$ with the property that there are exactly $t$ lines of $\overline{\mathcal{L}}$ through
each point not in $\overline{\mathcal{P}}$, and there are exactly $t$ points of
$\overline{\mathcal{P}}$ on each line not in $\overline{\mathcal{L}} $.
\end{mydef}
Note that if the generalized polygon has order $q$ and $t \leq q$, then $(\overline{\mathcal{P}}, \overline{\mathcal{L}})$ is a $t$-good structure if and only if
the subgraph of the incidence graph induced by the points and lines not contained in
$\overline{\mathcal{P}} \cup \overline{\mathcal{L}}$ is $(q+1 - t)$-regular.  From the definition it
follows that we must have $|\overline{\mathcal{P}}| = |\overline{\mathcal{L}}| =
|\overline{\mathcal{P}} \cup \overline{\mathcal{L}}|/2$.  We call this quantity the \textit{size} of
the $t$-good structure. 

In this paper, we will be constructing new $1$-good structures in known generalized quadrangles and
hexagons of order $q$, for a prime power $q$, to improve the known upper bounds on $c(q,8)$ and
$c(q,12)$.  For these constructions, we will be using the symplectic generalized quadrangle
$\sympl(3,q)$ and the split Cayley generalized hexagon $\GH(q)$, which we describe below.

Let $\beta$ be a symplectic form defined on the three dimensional projective space $\PG(3,q)$, over
the finite field $\mathbb{F}_q$.  Since all symplectic forms on $\PG(3,q)$ are pairwise isometric,
we can take $\beta$ to be the form defined by $\beta((x_0, x_1, x_2, x_3), (y_0, y_1, y_2, y_3)) =
x_0y_1 - x_1y_0 + x_2y_3 - x_3y_1$.  Then the points and lines of $\PG(3,q)$ which are totally
isotropic with respect to $\beta$, that is, points $X$ which satisfy $\beta(X,X) = 0$ and lines $\ell$
for which all points $X, Y$ incident with $\ell$ satisfy $\beta(X,Y) = 0$, form a generalized
quadrangle of order $q$.  This generalized quadrangle is denoted by $\sympl(3,q)$ and it is known as
the finite \textit{symplectic generalized quadrangle}.  We note that the form $\beta$ defines a
polarity $\perp$ of $\PG(3,q)$ which maps a subspace $S$ to $S^\perp = \{Y : \beta(X,Y) =
0,\;\text{for all } X \in S\}$, and in fact the elements of $\sympl(3,q)$ are those elements $S$ of $\mathrm{PG}(3,q)$
which satisfy $S \subseteq S^\perp$. 

Now let $\Q$ be a non-singular quadric in $\PG(n,q)$, that is, the set of points satisfying an
irreducible quadratic form which cannot be described in fewer variables.  From the standard
classification of such quadrics, it follows that $\Q$ can be one of the following three types.
\begin{enumerate}[(1)]
\item $n = 2m$, $\Q$ is \textit{parabolic} with its quadratic form equivalent to \[\Q(x_0, \dots,
  x_{2m}) = x_0^2 + x_1x_2 + \cdots + x_{2m-1}x_{2m}.\]

\item $n = 2m - 1$, $\Q$ is \textit{hyperbolic} with its quadratic form equivalent to \[\Q(x_0,
  \dots, x_{2m-1}) = x_0x_1 + x_2x_3 + \cdots + x_{2m-2}x_{2m-1}.\]

\item $n = 2m - 1$, $\Q$ is \textit{elliptic} with its quadratic form equivalent to \[\Q(x_0, \dots,
  x_{2m-1}) = f(x_0, x_1) + x_2x_3 + \cdots + x_{2m-2}x_{2m-1},\] with $f(x_0, x_1)$ an irreducible
  degree $2$ polynomial over $\mathbb{F}_q$.
\end{enumerate}
The parabolic quadrics are denoted by $\Q(2m,q)$, the hyperbolic by $\Q^+(2m-1,q)$ and the elliptic
by $\Q^-(2m-1,q)$.  
Let $\Q$ be a non-singular quadric. 
A \textit{cone} with vertex a point $X$ over $\Q$, denoted by $X \cdot \Q$, is the set of all points lying on the lines joining $X$ to a
point of a quadric isomorphic to $\Q$ lying in a hyperplane which  does not contain $X$.  Similarly, a cone with vertex a
line $\ell$ over $\Q$, denoted by $\ell \cdot \Q$, is the set of all points lying on the lines joining the points of
$\ell$ to the points of a quadric $\Q$ in an $(n-2)$-dimensional subspace disjoint from $\ell$.

A subspace $S$ of $\PG(n,q)$ is called \textit{totally singular} with respect to a quadric $\Q$ if
all of its points are contained in $\Q$.  The maximum \textit{vector space dimension} of a totally
singular subspace of $\Q$ is called the \textit{Witt index} of $\Q$, and the Witt indices of
$\Q(2m,q)$, $\Q^+(2m-1,q)$ and $\Q^-(2m-1,q)$ are $m$, $m$ and $m - 1$, respectively.  If the Witt
index of a quadric $\Q$ is $2$, then its totally singular points and lines form a generalized
quadrangle.  The quadric $\Q(4,q)$ gives rise the to the generalized quadrangle which is the
point-line dual of $\sympl(3,q)$, and therefore it has the same incidence graph as $\sympl(3,q)$.

A standard way of constructing the split Cayley generalized hexagon of order $q$, denoted by
$\GH(q)$, is by using the parabolic quadric $\Q(6,q)$ in the following way.  The points of $\GH(q)$
are all the points of $\Q(6,q)$ while the lines of $\GH(q)$ are only those lines of
$\Q(6,q)$ whose Grassmann coordinates satisfy a certain condition.  Since we will not be using this
condition on lines directly, we refer the interested reader to \cite[Section
  2.4.13]{VanMaldeghem98}.  What we will need is some of the well known geometric properties of
$\GH(q)$ which follow from its definition.  We summarise these properties and introduce some
terminology.

\begin{enumerate}[P1.]
\item The set of points of $\GH(q)$ is identical to the set of points of $\Q(6,q)$.
\item The set of $q+1$ $\GH(q)$-lines incident with a point $P$ span a totally singular plane of
  $\Q(6,q)$, called an \emph{$\GH(q)$-plane} with \emph{centre} $P$.
\item Every plane of $\Q(6,q)$ is either an $\GH(q)$-plane or contains no $\GH(q)$-lines. In the
  latter case the plane is called an \emph{ideal plane} of $\Q(6,q)$.
\item For a point $P$, the set of points at distance at most $4$ from $P$ in $\GH(q)$ is equal to
  the set of points collinear with $P$ in the quadric $\Q(6,q)$.  This set is denoted by $P^\perp$.
\item Every line of $\Q(6, q)$ is incident with exactly $q + 1$ planes of $\Q(6, q)$, and if the line is an $\GH(q)$-line, then each of these planes is an
$\GH(q)$-plane. 
\item Every \emph{ideal line}, that is, a line of $\Q(6,q)$ not in $\GH(q)$, is incident with a
  unique $\GH(q)$-plane.
\end{enumerate}

Proofs of these properties can be found in \cite[Section 1.4.2]{Offer_thesis}.  Finally, we refer
the reader to \cite{Ball_Book} for a quick introduction to the basic notions from finite geometry
and to \cite{Hirschfeld-Thas_book} for a standard reference on the subject.

\section{Bounds on  regular induced subgraphs}
\label{sec:bounds}
Recall that a $t$-good structure in a generalized polygon is equivalent to the collection of points
and lines not contained in a $(q + 1 - t)$-regular induced subgraph of the incidence graph.
Therefore, to study how big a $t$-good structure can be, we will give lower bounds on the size of
a regular induced subgraph.  First we recall the Expander Mixing Lemma for bipartite graphs
\cite[Section 2.4]{Hoory-Linial-Wigderson06} (one of the oldest references for this lemma is Theorem
3.1.1 in \cite{Haemers_thesis}, which is also Theorem 5.1 in \cite{Haemers95_interlacing}).  A
direct proof of this lemma is given in \cite[Section 3.2]{WSV12}.

\begin{lem}
\label{lem:expander_mixing}
Let $G = (L, R, E)$ be a biregular bipartite graph and let $\lambda_1 \ge \lambda_2 \ge \dots \ge
\lambda_{|L| + |R|}$ be the eigenvalues of its adjacency matrix.  Let $S \subseteq L$ and $T
\subseteq R$ be such that $|S| = \alpha |L|$ and $|T| = \beta |R|$ for some real numbers $\alpha, \beta
\in [0, 1]$.  Denote by $e(S, T)$ the number of edges which have vertices in the sets $S$ and $T$.
Then we have
\[\left\lvert \frac{e(S,T)}{|E|} - \alpha \beta \right\rvert \le \frac{\lambda_2}{\lambda_1} \sqrt{\alpha \beta (1 - \alpha)(1 - \beta)}.\]
\end{lem}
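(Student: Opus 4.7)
The plan is to pass through the singular value decomposition of the biadjacency matrix. Let $A$ be the $|L|\times|R|$ biadjacency matrix of $G$, with singular values $\sigma_1\ge \sigma_2\ge \cdots\ge 0$. Because $G$ is bipartite, its full adjacency matrix is the block matrix $\bigl(\begin{smallmatrix} 0 & A \\ A^{T} & 0 \end{smallmatrix}\bigr)$, whose spectrum is $\{\pm \sigma_i\}$ together with zeros; sorting these in decreasing order identifies $\lambda_1=\sigma_1$ and $\lambda_2=\sigma_2$. Biregularity, with left degree $d_L$ and right degree $d_R$ satisfying $d_L|L|=d_R|R|=|E|$, immediately gives $A\mathbf{1}_R=d_L\mathbf{1}_L$ and $A^{T}\mathbf{1}_L=d_R\mathbf{1}_R$, so $\mathbf{1}_L/\sqrt{|L|}$ and $\mathbf{1}_R/\sqrt{|R|}$ are the top left and right singular vectors, with $\sigma_1=\sqrt{d_Ld_R}=|E|/\sqrt{|L||R|}$.

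Next, I would decompose the indicator vectors along and across these top singular directions, writing $\mathbf{1}_S=\alpha\mathbf{1}_L+f$ and $\mathbf{1}_T=\beta\mathbf{1}_R+g$ with $f\perp \mathbf{1}_L$ and $g\perp \mathbf{1}_R$. Counting ones, $\|f\|^{2}=\alpha(1-\alpha)|L|$ and $\|g\|^{2}=\beta(1-\beta)|R|$. Expanding $e(S,T)=\mathbf{1}_S^{T}A\,\mathbf{1}_T$ into four pieces, the two cross terms vanish because $\mathbf{1}_L^{T}A\,g=d_R\mathbf{1}_R^{T}g=0$ and $f^{T}A\,\mathbf{1}_R=d_L f^{T}\mathbf{1}_L=0$, leaving
\[
e(S,T)-\alpha\beta|E| \;=\; f^{T}A\,g.
\]

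Finally, since $f$ and $g$ are orthogonal to the top singular vectors of $A$, the usual SVD argument $Ag=\sum_{i\ge 2}\sigma_i\langle v_i,g\rangle u_i$ combined with Cauchy--Schwarz gives $|f^{T}A\,g|\le\sigma_2\|f\|\|g\|=\lambda_2\sqrt{\alpha(1-\alpha)\beta(1-\beta)\,|L||R|}$. Substituting $\sqrt{|L||R|}=|E|/\lambda_1$ and dividing by $|E|$ produces exactly the stated inequality. The only genuinely delicate points are the spectral identification $\lambda_2=\sigma_2$ (for which the bipartite block structure is essential) and the orthogonality computation that kills the two cross terms; everything else is a routine norm bound, and one does not need to assume $G$ is connected or that $\sigma_2>0$ (if $\sigma_2=0$ the bound forces $e(S,T)=\alpha\beta|E|$, which is the trivial case of a disjoint union of complete bipartite graphs).
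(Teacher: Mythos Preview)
Your proof is correct. Note, however, that the paper does not actually prove this lemma; it merely states it and cites external references (Haemers' thesis and \cite{WSV12}) for a proof. Your argument via the singular value decomposition of the biadjacency matrix is precisely the standard spectral proof of the bipartite expander mixing lemma (and is essentially what one finds in the cited references): identify $\lambda_1,\lambda_2$ with the top two singular values of $A$, project the indicator vectors off the Perron singular directions, observe that biregularity kills the cross terms, and bound the residual bilinear form by $\sigma_2\|f\|\|g\|$. All of your computations check out, including the identification $\sigma_1=\sqrt{d_Ld_R}=|E|/\sqrt{|L||R|}$ and the norm identities $\|f\|^2=\alpha(1-\alpha)|L|$, $\|g\|^2=\beta(1-\beta)|R|$.
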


\begin{thm}
\label{thm:main}
Let $G = (L, R, E)$ be a $d$-regular bipartite graph and let $\lambda$ be its second largest
eigenvalue.  Let $H$ be a $k$-regular induced subgraph of $G$, with $k \geq 1$.  Then we have
\[\frac{k - \lambda}{d - \lambda} \le \frac{|V(H)|}{|V(G)|} \le \frac{k + \lambda}{d + \lambda}.\]
\end{thm}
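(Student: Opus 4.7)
The plan is to apply the Expander Mixing Lemma (Lemma \ref{lem:expander_mixing}) directly with $S = V(H) \cap L$ and $T = V(H) \cap R$. Since $G$ is bipartite, all edges of the induced subgraph $H$ lie between $S$ and $T$, so $e(S,T) = k|S| = k|T|$; as $k \geq 1$, this forces $|S| = |T|$. Because $G$ is $d$-regular and bipartite we also have $|L| = |R|$, so setting $\alpha = |S|/|L|$ we get $\alpha = |T|/|R|$ as well, and moreover $|V(H)|/|V(G)| = \alpha$.

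Next I would plug in the standard values for a $d$-regular bipartite graph, namely $\lambda_1 = d$ and $|E| = d|L|$, so that $e(S,T)/|E| = k\alpha/d$ and $\sqrt{\alpha\beta(1-\alpha)(1-\beta)} = \alpha(1-\alpha)$. The Expander Mixing Lemma then reads
\[
\left\lvert \frac{k\alpha}{d} - \alpha^2 \right\rvert \le \frac{\lambda}{d}\, \alpha(1-\alpha).
\]
Multiplying through by $d/\alpha$ (permissible because $k \geq 1$ forces $\alpha > 0$) yields the cleaner inequality $|k - d\alpha| \le \lambda(1-\alpha)$.

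Finally, I would split this absolute-value inequality into its two halves and rearrange. The upper side $k - d\alpha \le \lambda - \lambda\alpha$ regroups to $(d-\lambda)\alpha \ge k - \lambda$, giving the lower bound on $\alpha$, while the lower side $-\lambda + \lambda\alpha \le k - d\alpha$ regroups to $(d+\lambda)\alpha \le k + \lambda$, giving the upper bound. Dividing through by $d \pm \lambda$ (both positive, since for a connected $d$-regular bipartite graph $-d < \lambda < d$) and recalling $\alpha = |V(H)|/|V(G)|$ yields the claimed double inequality.

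There is no serious obstacle here: the only subtlety is verifying that $|S| = |T|$ so that a single parameter $\alpha = \beta$ governs both densities, which collapses the square root in the Expander Mixing Lemma to the linear factor $\alpha(1-\alpha)$ needed for the argument to go through cleanly. The positivity of $d \pm \lambda$ is automatic in our intended applications (incidence graphs of generalized polygons are connected), so no separate case analysis is required.
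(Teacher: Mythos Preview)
Your proof is correct and follows essentially the same route as the paper: apply the Expander Mixing Lemma with $S = V(H)\cap L$ and $T = V(H)\cap R$, use $k$-regularity of $H$ (and $k\ge 1$) to force $|S|=|T|$, and then simplify the resulting inequality $\bigl|\tfrac{k\alpha}{d}-\alpha^2\bigr|\le \tfrac{\lambda}{d}\alpha(1-\alpha)$ to obtain both bounds. The paper presents the same computation with $x=|S|=|T|$ and $n=|L|=|R|$ in place of your $\alpha=x/n$, and likewise defers the positivity of $d-\lambda$ to a remark rather than building it into the argument.
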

\begin{proof}
The subgraph $H$ must have equally many vertices in both $L$ and $R$ since it is $k$-regular.  Let
$x = |L \cap V(H)| = |R \cap V(H)| = |V(H)|/2$.  We must also have $|L| = |R|$ since $G$ is
$d$-regular. Let $n = |L| = |R| = |V(G)|/2$.  Then by applying Lemma \ref{lem:expander_mixing} to
the sets $L \cap V(H)$ and $R \cap V(H)$, both of size $x$, which have $kx$ edges between them we
get the following:
\[\left\lvert \frac{kx}{dn} - \frac{x^2}{n^2} \right\rvert \le \frac{\lambda}{d} \sqrt{x^2\left(1 - \frac{x}{n}\right)^2}.\]
Simplifying this we get the inequality
\[\frac{k - \lambda}{d - \lambda}  \le \frac{x}{n} \le \frac{k + \lambda}{d + \lambda}.\qedhere \] 
\end{proof}

\begin{rem}
In our paper we will only be interested in the lower bound in Theorem \ref{thm:main}.  Note that
this lower bound is well defined only for $d > \lambda$ and positive only for $k > \lambda$. 
\end{rem}

\begin{rem}
The upper bound in Theorem \ref{thm:main} can be improved to $(k + |\lambda_n|)/(d + |\lambda_n|)$
when $G$ is a non-bipartite graph and $k$ is allowed to be $0$, in which case we get a generalization of the well known
Hoffman-Delsarte bound on independent sets in a regular subgraph since an independent set is
equivalent to an induced $0$-regular subgraph.  This generalization, along with Theorem
\ref{thm:main}, was already proved by Haemers in \cite[Theorem 2.1.4]{Haemers_thesis}.
\end{rem}

We are now ready to prove our main result regarding the size of $t$-good structures in generalized
polygons.

\begin{thm}
\label{thm:main_tgood}
Let $(\overline{\mathcal{P}}, \overline{\mathcal{L}})$ be a $t$-good structure in a generalized
$n$-gon of order $q$ for integers $q \ge 2$ and $q \geq t \geq 1$.  Then
\[|\overline{\mathcal{P}}| = |\overline{\mathcal{L}}| \le 
\begin{cases}
t(q + \sqrt{q} + 1), & \text{if } n = 3;\\
t(q + 1)(q + \sqrt{2q} + 1), & \text{if } n = 4; \\
t(q+1)(q^2 + 1)(q + \sqrt{3q} + 1), & \text{if } n = 6.
\end{cases}\]
\end{thm}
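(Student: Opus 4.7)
The plan is a direct application of Theorem~\ref{thm:main} to the incidence graph $G$ of the generalized $n$-gon. By the remark immediately following the definition of $t$-good structure, the subgraph $H$ of $G$ induced on the complement of $\overline{\mathcal{P}}\cup\overline{\mathcal{L}}$ is $(q+1-t)$-regular, and since $q\ge t\ge 1$ the parameter $k=q+1-t$ satisfies the hypothesis $k\ge 1$ of Theorem~\ref{thm:main}. Meanwhile, $G$ itself is a $(q+1)$-regular bipartite graph on $2|\mathcal{P}|$ vertices.

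The first input needed is the second largest eigenvalue $\lambda$ of $G$, which is a standard consequence of Feit--Higman theory: for the incidence graph of a generalized $n$-gon of order $q$ one has $\lambda=\sqrt{q}$, $\sqrt{2q}$, and $\sqrt{3q}$ for $n=3,4,6$ respectively. The second input is the vertex count: $|\mathcal{P}|=q^2+q+1$ for projective planes, $(q+1)(q^2+1)$ for quadrangles, and $(q+1)(q^4+q^2+1)$ for hexagons.

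Plugging $d=q+1$ and $k=q+1-t$ into the lower bound of Theorem~\ref{thm:main} gives
\[\frac{|V(H)|}{|V(G)|}\;\ge\;\frac{q+1-t-\lambda}{q+1-\lambda},\]
and since $|V(G)|-|V(H)|=|\overline{\mathcal{P}}|+|\overline{\mathcal{L}}|=2|\overline{\mathcal{P}}|$, rearranging yields
\[|\overline{\mathcal{P}}|\;\le\;\frac{t\,|\mathcal{P}|}{q+1-\lambda}.\]
Note that $q+1-\lambda>0$ in every case, since $(q+1)^2>3q$ for $q\ge 2$, so the denominator is always positive; when $k\le\lambda$ the bound is still valid but weaker than the trivial bound $|\overline{\mathcal{P}}|\le|\mathcal{P}|$.

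The last step is purely algebraic: rationalize by multiplying numerator and denominator by $q+1+\lambda$, and use the identities $(q+1)^2-jq = q^2+q+1,\ q^2+1,\ q^2-q+1$ for $j=1,2,3$. For $n=3$ this immediately delivers $t(q+\sqrt{q}+1)$, and for $n=4$ the factor $q^2+1$ cancels to give $t(q+1)(q+\sqrt{2q}+1)$. For $n=6$ one additionally invokes the factorization $q^4+q^2+1=(q^2+q+1)(q^2-q+1)$ to cancel the $q^2-q+1$ from the denominator, arriving at the stated hexagon bound. There is really no substantive obstacle: the entire argument is a spectral input (the eigenvalue $\lambda$) plugged into Theorem~\ref{thm:main}, followed by the rationalization above; the only place requiring genuine care is the factorization of $q^4+q^2+1$ in the hexagon case.
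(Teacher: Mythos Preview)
Your proposal is correct and follows essentially the same route as the paper: apply the lower bound of Theorem~\ref{thm:main} to the incidence graph, rearrange to $|\overline{\mathcal{P}}|\le t\,\theta_n/(q+1-\lambda)$, and then substitute the known eigenvalues $\lambda=\sqrt{q},\sqrt{2q},\sqrt{3q}$ and point counts, simplifying via the factorizations of $q^2+1$ and $q^4+q^2+1$. The only cosmetic difference is that the paper writes the simplification as $q^4+q^2+1=(q^2+1)(q-\sqrt{3q}+1)(q+\sqrt{3q}+1)$ whereas you split it into the rationalization step together with $q^4+q^2+1=(q^2+q+1)(q^2-q+1)$; these are the same computation.
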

\begin{proof}
Let $H$ be the $(q + 1 - t)$-regular induced subgraph of the incidence graph $G$ of the generalized
$n$-gon whose vertices are the points and lines not in $\overline{\mathcal{P}} \cup
\overline{\mathcal{L}}$.  The lower bound in Theorem \ref{thm:main} can be rephrased as
\begin{equation}
\label{eq}
1 - \frac{|V(H)|}{|V(G)|} \le \frac{q + 1 - (q + 1 - t)}{q + 1 - \lambda} = \frac{t}{q + 1 - \lambda}.
\end{equation}
If $\theta_n$ is the total number of points in the generalized $n$-gon, then the left hand side in
(\ref{eq}) is equal to $|\overline{\mathcal{P}}|/\theta_n$.  It is well known, and easy to prove,
that the second largest eigenvalue of a generalized $n$-gon of order $q$ is $\sqrt{q}, \sqrt{2q},
\sqrt{3q}$ for $n = 3$, $4$ and $6$, respectively \footnote{See for example the proof of Proposition
  7.2.7 in \cite{VanMaldeghem98}. For our purposes, we only need these values as upper bounds to the
  second largest eigenvalues, which is proved in Section 3.4 of \cite{WSV12}.}.  The number of
points $\theta_n$ in a generalized $n$-gon of order $q$ is equal to $q^2 + q + 1$, $(q + 1)(q^2
+ 1)$ and $(q + 1)(q^4 + q^2 + 1)$, for $n = 3$, $4$ and $6$, respectively.  We  now substitute these
values in (\ref{eq}) for each $n \in \{3, 4, 6\}$ and simplify the expression by noting that $q^2 + 1 = (q - \sqrt{2q} + 1)(q + \sqrt{2q} + 1)$ and $q^4 + q^2 + 1 = (q^2 + 1)(q - \sqrt{3q} + 1)(q + \sqrt{3q} + 1)$. 
\end{proof}

Note that for $t = 1$ and $n = 4$, our bound improves the Beukemann-Metsch bound of $2q^2 + 2q - 1$
on the number of points of a $1$-good structure in a generalized quadrangle of order $q$
\cite[Theorem 1.1]{BM11}. In fact, Beukemann and Metsch proved the bound only for the quadrangle
$\Q(4,q)$ while our bound holds for arbitrary generalized quadrangles of order $q$, where $q$ can be
any integer.  Our bound also answers the question of Metsch \cite[Section 6]{Metsch11} by proving
that there cannot exist any constant $c > 1$ for which the parabolic quadric $\Q(4,q)$ has a
$1$-good structure of size greater than $c q^2$.

\section{Constructions in generalized quadrangles}
\label{sec:const}
In this section we give a construction that ``lifts'' a $1$-good structure in $\PG(2,q)$ to a
$1$-good structure in the symplectic generalized quadrangle $\sympl(3,q)$ (see \cite[Chapter 3]{FGQ} for a description of this object).  By \cite[Theorem
  3.10]{Gacs-Heger08}, there are only three kinds of $1$-good structures in $\PG(2, q)$ and they are
of sizes $q + 1$, $q + 2$ and  $q + \sqrt{q} + 1$, respectively; the last possibility corresponds to
a Baer subplane in $\PG(2,q)$ which only exists when $q$ is a square.  If we start with a Baer
subplane of $\PG(2,q)$ in our construction, then we obtain a $1$-good structure which is larger than
those constructed by Beukemann and Metsch \cite{BM11}.

\begin{thm}
\label{thm:construction}
Let $\sympl(3,q)$ be the generalized quadrangle obtained by taking a symplectic form on $\PG(3,q)$
and let $\perp$ be the polarity defined by this form.  Denote the set of points and lines of
$\sympl(3,q)$ by $\m P$ and $\m L$, respectively.  Fix a point $P$ of $\sympl(3,q)$ and let $\pi =
P^\perp$ be the plane in $\PG(3,q)$ which contains all points of $\sympl(3,q)$ collinear with $P$.
Let $(\overline{\mathcal{P}}', \overline{\mathcal{L}}')$ be a $1$-good structure in $\pi \cong
\PG(2,q)$.  Define the following sets in $\sympl(3,q)$.
\begin{itemize}
\item $\overline{\mathcal{P}} = \{P\} \cup \overline{\mathcal{P}}' \cup \{X \in \m P
  \mid X^\perp \cap \pi \in \overline{\mathcal{L}}'\}$,
\item $\overline{\mathcal{L}} = \{\ell \in \m L \mid P \inc \ell\} \cup \{\ell \in \m L \setminus \pi
  \mid \ell \cap \pi \in \overline{\mathcal{P}}'\}$. 
  %\footnote{We are denoting the set of
%    lines of $\sympl(3,q)$ which are not incident with the plane $\pi$ by $%\m P \setminus \pi$ and
%    $\m L \setminus \pi$, respectively.}
\end{itemize}
Then $(\overline{\mathcal{P}}, \overline{\mathcal{L}})$ is a $1$-good structure in $\sympl(3,q)$.
Moreover, if $P \in \overline{\mathcal{P}}'$ then $|\overline{\mathcal{P}}| =
q|\overline{\mathcal{P}}'| + 1$ and if $P \not \in \overline{\mathcal{P}}'$, then
$|\overline{\mathcal{P}}| = q|\overline{\mathcal{P}}'| + q + 1$.
\end{thm}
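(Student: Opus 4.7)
The plan is to verify the $1$-good property of $(\overline{\m P}, \overline{\m L})$ by a case analysis keyed to whether the point or line sits in the tangent plane $\pi = P^\perp$, and then to compute $|\overline{\m P}|$ by inclusion--exclusion. The key geometric input is that the $\sympl(3,q)$-lines contained in $\pi$ are exactly the $q+1$ lines of the pencil through $P$; consequently every other $\sympl(3,q)$-line meets $\pi$ in a single point, and one readily establishes two bijections from the symplectic polarity: (I) for $X \notin \pi$, the map $\ell \mapsto \ell \cap \pi$ is a bijection from the $\sympl(3,q)$-lines through $X$ onto the points of $X^\perp \cap \pi$; (II) for an $\sympl(3,q)$-line $\ell$ with $\ell \cap \pi = \{Y\}$, the map $X \mapsto X^\perp \cap \pi$ is a bijection from the points of $\ell$ onto the lines of $\pi$ through $Y$, with injectivity coming from $\ell^\perp = \ell$.

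With these in hand the $1$-good property is routine. A point $X \notin \overline{\m P}$ with $X \in \pi \setminus \{P\}$ has $PX$ as its only line of $\overline{\m L}$; if instead $X \notin \pi$, then $X^\perp \cap \pi \notin \overline{\m L}'$, so by $1$-goodness in $\pi$ exactly one point of $\overline{\m P}'$ lies on $X^\perp \cap \pi$, and (I) lifts this to exactly one line of $\overline{\m L}$ through $X$. Dually, any $\ell \in \m L \setminus \overline{\m L}$ must avoid $P$ (every $\sympl$-line in $\pi$ passes through $P$ and hence lies in $\overline{\m L}$), so it meets $\pi$ in a single point $Y \notin \overline{\m P}'$; by $1$-goodness in $\pi$ there is a unique line of $\overline{\m L}'$ through $Y$, and (II) matches this with the unique point of $\overline{\m P}$ on $\ell$.

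For the cardinality, set $T := \{X \in \m P : X^\perp \cap \pi \in \overline{\m L}'\}$. A direct polarity computation (noting that $m^\perp = m$ when $P \in m$, while $m^\perp$ is a line through $P$ not contained in $\pi$ when $P \notin m$) gives exactly $q$ points $X$ with $X^\perp \cap \pi = m$ for each $m \in \overline{\m L}'$, hence $|T| = q|\overline{\m P}'|$ and $P \notin T$. The task reduces to computing $|\overline{\m P}' \cap T|$, the number of $Q \in \overline{\m P}' \setminus \{P\}$ with $PQ \in \overline{\m L}'$. Here the crucial elementary observation is that any two points of $\overline{\m P}'$ span a line of $\overline{\m L}'$: otherwise that spanning line would lie outside $\overline{\m L}'$ yet contain two points of $\overline{\m P}'$, violating $1$-goodness; dually, any two lines of $\overline{\m L}'$ intersect in a point of $\overline{\m P}'$. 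When $P \in \overline{\m P}'$ the former immediately forces $|\overline{\m P}' \cap T| = |\overline{\m P}'| - 1$ and hence $|\overline{\m P}| = q|\overline{\m P}'| + 1$.

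The main obstacle is the case $P \notin \overline{\m P}'$, where letting $m_P$ be the unique line of $\overline{\m L}'$ through $P$ and $M := |m_P \cap \overline{\m P}'|$, one gets $|\overline{\m P}' \cap T| = M$ and must prove $M = s - q$ with $s := |\overline{\m P}'|$. I would double-count ordered pairs $(Q_1, Q_2) \in (m_P \cap \overline{\m P}') \times (\overline{\m P}' \setminus m_P)$: directly one gets $M(s-M)$, while grouping by the line $Q_1 Q_2 \in \overline{\m L}' \setminus \{m_P\}$ (and using the dual observation that such a line meets $m_P$ in a unique point of $\overline{\m P}'$) yields $\sum_{m' \neq m_P}(|m' \cap \overline{\m P}'| - 1) = (q+1)(s-q) - M$, after substituting the flag identity $\sum_{m' \in \overline{\m L}'} |m' \cap \overline{\m P}'| = (q+2)s - q^2 - q - 1$ (itself obtained by counting pairs $(Q, m')$ with $Q \in m' \in \overline{\m L}'$ and applying $1$-goodness for $Q \notin \overline{\m P}'$). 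The resulting quadratic $M(s+1-M) = (q+1)(s-q)$ factors as $(M - (s-q))(M - (q+1)) = 0$, and the root $q+1$ is excluded by $P \in m_P \setminus \overline{\m P}'$, which forces $M \le q$; hence $M = s-q$ and $|\overline{\m P}| = q|\overline{\m P}'| + q + 1$.
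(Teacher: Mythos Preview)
Your verification that $(\overline{\m P},\overline{\m L})$ is $1$-good is correct and essentially identical to the paper's: the same two bijections (I) and (II) and the same case split according to whether the point or line lies in $\pi$.

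For the cardinality, your argument is correct but considerably more elaborate than the paper's. You compute $|\overline{\m P}|$ by inclusion--exclusion on $\{P\}\cup\overline{\m P}'\cup T$, which forces you to determine $|\overline{\m P}'\cap T|$; in the case $P\notin\overline{\m P}'$ this requires your double-count and the quadratic factoring to pin down $M=|m_P\cap\overline{\m P}'|=s-q$. The paper sidesteps all of this by counting $|\overline{\m L}|$ instead (and using $|\overline{\m P}|=|\overline{\m L}|$): there are exactly $q+1$ lines of $\overline{\m L}$ through $P$, and for each point of $\overline{\m P}'\setminus\{P\}$ exactly $q$ further $\sympl(3,q)$-lines (those not contained in $\pi$) lie in $\overline{\m L}$, with these contributions disjoint. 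This gives $q+1+q(|\overline{\m P}'|-1)$ or $q+1+q|\overline{\m P}'|$ in one line, with no need to analyse how $\overline{\m P}'$ meets the lines of $\overline{\m L}'$ through $P$. Your route does yield, as a by-product, the identity $|m_P\cap\overline{\m P}'|=|\overline{\m P}'|-q$ for any $1$-good structure in $\PG(2,q)$, but for the theorem at hand the paper's count on the line side is the shorter path.
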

\begin{proof}
Let $\ell$ be a line in $\m L \setminus \overline{\mathcal{L}}$.  Then $\ell$ is not contained in
$\pi$ (because it is not incident with $P$) and the point $Q = \ell \cap \pi$ is not contained in
$\overline{\mathcal{P}}'$.  Therefore there exists a unique line $m$ of the plane $\pi$ (which may
or may not be in the set $\m L$) through $Q$ that lies in $\overline{\mathcal{L}}'$ since
$(\overline{\mathcal{P}}', \overline{\mathcal{L}}')$ is a $1$-good structure in $\pi$.  There is a
bijective correspondence between the $q + 1$ points $Y$ on $\ell$ and the $q + 1$ lines through $Q$
in $\pi$ via the map $Y \mapsto Y^\perp \cap \pi$.  Then the unique point on $\ell$ corresponding to
$m$ is the unique point of $\overline{\mathcal{P}}$ incident with $\ell$.

Now let $X$ be a point in $\m P \setminus \overline{\mathcal{P}}$.  If $X \in \pi$, then the line
$PX$ lies in $\overline{\mathcal{L}}$ while every other line of $\sympl(3,q)$ through $X$, which
must lie outside the plane $\pi$, is in $\m L \setminus \overline{\mathcal{L}}$.  Now say $X \not
\in \pi$.  Then the line $X^\perp \cap \pi$ is a line of $\pi$ not contained in
$\overline{\mathcal{L}}'$, and hence it contains a unique point $Y$ of $\overline{\mathcal{P}}$.
The line $XY$ is the unique line of $\overline{\mathcal{L}}$ through $X$.  Therefore,
$(\overline{\mathcal{P}}, \overline{\mathcal{L}})$ is a $1$-good structure in $\sympl(3,q)$.

Note that $|\overline{\mathcal{L}}| = |\overline{\mathcal{P}}|$ and $|\overline{\mathcal{L}}'| =
|\overline{\mathcal{P}}'|$, and thus it suffices to calculate $|\overline{\mathcal{L}}|$.  There are
$q + 1$ lines of $\overline{\mathcal{L}}$ in $\pi$.  And for every point $X \in
\overline{\mathcal{P}}' \setminus \{P\}$, every line of $\sympl(3,q)$ through $X$ is in
$\overline{\mathcal{L}}$.  Therefore,
\[|\overline{\mathcal{P}}| = |\overline{\mathcal{L}}| = 
\begin{cases}
q + 1 + q(|\overline{\mathcal{P}}'| - 1), & \text{ if } P \in \overline{\mathcal{P}}'\\
q + 1 + q|\overline{\mathcal{P}}'| , & \text{ if } P \not \in \overline{\mathcal{P}}'.
\end{cases}\qedhere
\]
\end{proof}

From Theorem \ref{thm:construction} and \cite[Theorem 3.10]{Gacs-Heger08}, we can construct $1$-good
structures of sizes $q^2 + q + 1, q^2 + 2q + 1, q^2 + 3q + 1$, $q^2 + q\sqrt{q} + q + 1$ and $q^2 +
q\sqrt{q} + 2q + 1$ in a generalized quadrangle of order $q$ by starting with a $1$-good structure
of size $q + 1$, $q + 2$ or $q + \sqrt{q} + 1$ in the plane $\pi$.  Previously, the best known
construction was of size $q^2 + 3q + 1$ for $q$ odd and $q^2 + 4q + 3$ for $q$ even \cite{BM11}.
Therefore, for $q$ square, we have constructed larger $1$-good structures and proved the following.

\begin{thm}
For $q$ an even power of a prime, we have
\[c(q, 8) \le 2(q^3 - q\sqrt{q} - q).\]
\end{thm}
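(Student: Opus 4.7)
The plan is to instantiate Theorem~\ref{thm:construction} using a Baer subplane as the base $1$-good structure. Since $q$ is an even power of a prime, $q$ is a perfect square, so by \cite[Theorem~3.10]{Gacs-Heger08} the Baer subplanes of $\PG(2,q)$ give $1$-good structures of size $q + \sqrt{q} + 1$, the largest possible in $\PG(2,q)$.

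First I would fix a point $P$ of $\sympl(3,q)$ and set $\pi = P^\perp$; since $P$ is isotropic we have $P \in \pi$, and $\pi \cong \PG(2,q)$. In $\pi$ I would then select a Baer subplane $(\overline{\mathcal{P}}', \overline{\mathcal{L}}')$ not containing $P$. This is always possible: the point-transitivity of $\mathrm{PGL}(3,q)$ on $\pi$, combined with the fact that $q + \sqrt{q} + 1 < q^2 + q + 1$ so that some point lies off any given Baer subplane, lets us translate a Baer subplane so as to avoid $P$.

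Next I would apply Theorem~\ref{thm:construction} in the case $P \notin \overline{\mathcal{P}}'$, which produces a $1$-good structure $(\overline{\mathcal{P}}, \overline{\mathcal{L}})$ in $\sympl(3,q)$ of size
\[
q(q + \sqrt{q} + 1) + q + 1 = q^2 + q\sqrt{q} + 2q + 1.
\]
The complementary induced subgraph $H$ of the incidence graph of $\sympl(3,q)$ is then $q$-regular, and its vertex count is
\[
2(q+1)(q^2+1) - 2(q^2 + q\sqrt{q} + 2q + 1) = 2(q^3 - q\sqrt{q} - q).
\]
Finally, because $H$ is an induced subgraph of a graph with girth $8$, its own girth is at least $8$, so $H$ witnesses the bound $c(q,8) \le 2(q^3 - q\sqrt{q} - q)$, using in addition the monotonicity $c(q,g_1) < c(q,g_2)$ for $g_1 < g_2$ noted in the introduction if the girth happens to exceed $8$.

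No substantive obstacle remains once Theorem~\ref{thm:construction} is in hand: the only real checks are the arithmetic, the insistence on $P \notin \overline{\mathcal{P}}'$ (so as to collect the extra $q+1$ rather than just $1$ in the size formula), and the girth inequality, which is immediate from the inclusion $H \subseteq G$. If anything qualifies as the main subtlety it is the selection of a Baer subplane missing the prescribed point $P$, but this is handled by the transitivity argument above.
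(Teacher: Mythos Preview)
Your proposal is correct and follows essentially the same route as the paper: lift a Baer-subplane $1$-good structure of size $q+\sqrt{q}+1$ via Theorem~\ref{thm:construction} with $P\notin\overline{\mathcal{P}}'$ to obtain a $1$-good structure of size $q^2+q\sqrt{q}+2q+1$ in $\sympl(3,q)$, and read off the vertex count of the complementary $q$-regular induced subgraph. Your explicit justification that a Baer subplane avoiding $P$ exists is a small elaboration the paper leaves implicit, but otherwise the arguments coincide.
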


\begin{rem}
Unlike the Desarguesian projective planes, the problem of classifying $1$-good structures in the
classical generalized quadrangles appears to be extremely hard.
\end{rem}

\section{Constructions in split Cayley hexagons}
\label{sec:const2}

In this section we construct a large family of $1$-good structures in $\GH(q)$, which includes the
two known constructions in generalized hexagons \cite[Section 2]{Gacs-Heger08}.  The largest
$1$-good structure that we attain from our construction is of size $q^4 + 4q^3 + q^2 + q + 1$, which
implies the bound $c(q, 12) \le 2(q^5 - 3q^3)$ for all prime powers $q$. 

\begin{thm}
\label{1goodHq}
Let $q$ be a prime power and let $\GH(q) = (\mathcal{P}, \mathcal{L}, \inc)$ be the split Cayley hexagon
in its standard representation inside the quadric $\Q(6,q)$ in $\PG(6,q)$.  Let $S$ be a $4$-space
in $\PG(6,q)$ and let $A$ be a point in $S \cap \Q(6,q)$.  Let $\pi_A$ denote the $\GH(q)$-plane on
$A$ consisting of the $q+1$ lines of $\mathcal{L}$ containing $A$.  Define the following sets of
points and lines in $\GH(q)$:
\begin{align*}
\mathcal{P}_1&=\{ X\in \mathcal{P}  \colon \dist(X,A)\le 4\} &\mathcal{L}_1&=\{n\in\mathcal{L}\colon n\cap S\ne \varnothing \}\\
\mathcal{P}_2&=\{X\in\mathcal{P}\colon X\inc S\} &\mathcal{L}_2&=\{n\in\mathcal{L}\colon n\cap \pi_A\ne \varnothing \}\\
\mathcal{P}_3&=  \{X\in \mathcal{P}  \colon X\notinc S, \; | \Gamma_2(X)\cap S|\ne 1\}.
\end{align*} 

Then $\overline{\mathcal{P}} = \mathcal{P}_1 \cup \mathcal{P}_2 \cup \mathcal{P}_3$ and
$\overline{\mathcal{L}} = \mathcal{L}_1 \cup \mathcal{L}_2$ form a $1$-good structure.
\end{thm}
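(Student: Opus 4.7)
The plan is to verify directly the two defining conditions of a $1$-good structure: every line $n\notin \overline{\mathcal{L}}$ contains exactly one point of $\overline{\mathcal{P}}$, and every point $X\notin \overline{\mathcal{P}}$ lies on exactly one line of $\overline{\mathcal{L}}$. By property P4 I may identify $\mathcal{P}_1$ with $A^\perp$, and by property P2 the $q+1$ $\GH(q)$-lines through a point $Y$ are exactly the $q+1$ lines of the plane $\pi_Y$ through $Y$; this lets me translate statements about $\GH(q)$-lines through $Y$ meeting a subspace $U$ into the projective-plane statement that a line of $\pi_Y$ through $Y$ must meet $U\cap \pi_Y$.

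For the point condition, suppose $X\notin \overline{\mathcal{P}}$, so $\dist(X,A)=6$, $X\notinc S$, and $|\Gamma_2(X)\cap S|=1$. Because $X\notinc S$, each $\GH(q)$-line through $X$ meets $S$ in at most one point, and such meeting points are precisely the elements of $\Gamma_2(X)\cap S$; consequently exactly one line through $X$ belongs to $\mathcal{L}_1$. For $\mathcal{L}_2$, I would argue that no line through $X$ can meet $\pi_A$: if $m$ is such a line with $m\cap \pi_A = \{B\}$, then $\dist(X,B)=2$, and since $\pi_A \subset A^\perp$ one has $\dist(B,A)\le 2$, giving $\dist(X,A)\le 4$, which contradicts opposition. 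Hence $X$ lies on exactly one line of $\overline{\mathcal{L}}$.

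For the line condition, suppose $n\notin \overline{\mathcal{L}}$, so $n\cap S=\varnothing$ and $n\cap \pi_A=\varnothing$. Trivially $\mathcal{P}_2\cap n=\varnothing$. For $\mathcal{P}_1\cap n = n\cap A^\perp$, a dimension count inside $\PG(6,q)$ shows this is either a single point or all of $n$; I would rule out the second alternative by noting that if $n\subset A^\perp$ then $n$ must be at $\GH(q)$-distance $3$ from $A$ (a line at distance $5$ has at most one point in $A^\perp$, by a short girth-$12$ argument), so $n$ would cross some line $m$ through $A$ at a point $B$, but then $m\subset \pi_A$ would force $B\in n\cap \pi_A$, contradicting $n\cap \pi_A=\varnothing$. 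Hence $|\mathcal{P}_1\cap n|=1$. For $\mathcal{P}_3\cap n$, I would fix $X\in n$ and examine $S\cap \pi_X$: by a dimension count it is nonempty, it cannot be all of $\pi_X$ (else $X\in S$), and it cannot be a line $\ell$ either, because $X\notinc \ell$ would then force every line of $\pi_X$ through $X$ to meet $\ell\subset S$, and in particular $n$ would meet $S$. So $S\cap \pi_X$ is a single point, which yields $|\Gamma_2(X)\cap S|=1$ and $X\notin \mathcal{P}_3$. Combining, $|\overline{\mathcal{P}}\cap n|=1$.

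The main obstacle I foresee is the elimination of the ``line'' possibility for $S\cap \pi_X$ in the line condition; here the hypothesis $n\cap S=\varnothing$ is used crucially, via the fact that $n$ itself is one of the $q+1$ lines of $\pi_X$ through $X$ and would be forced to meet any line of $\pi_X$ contained in $S$. Everything else is the triangle inequality in the incidence graph together with routine dimension counts inside $\Q(6,q)\subset \PG(6,q)$.
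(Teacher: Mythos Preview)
Your proposal is correct and follows essentially the same approach as the paper: both verify the point and line conditions by splitting into the contributions of $\mathcal{L}_1,\mathcal{L}_2$ and $\mathcal{P}_1,\mathcal{P}_2,\mathcal{P}_3$ respectively, using the identification $\mathcal{P}_1=A^\perp=T_A\cap\Q(6,q)$ and the key observation that if $\pi_X\cap S$ is a line not through $X$ then any $\GH(q)$-line on $X$ (in particular $n$) must meet it. Your treatment of $|\mathcal{P}_1\cap n|=1$ via the girth-$12$ argument spells out what the paper compresses into the single sentence ``a line of $\GH(q)$ lying in $T_A$ must necessarily meet $\pi_A$'', but the substance is identical.
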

\begin{proof}
We split the proof into various cases.
\begin{description}
\item[Elements of $\mathcal{P}\backslash\overline{\mathcal{P}}$ are incident with $0$ elements of
  $\mathcal{L}_2$]

Let $X$ be a point incident with a line $n$ of $\mathcal{L}_2$. Now $n\cap\pi_A\ne
\varnothing$, which is equivalent to $\dist(n,A)\le 3$.  Moreover, since $X$ is a point of $n$, we
have $\dist(X,A)\le 4$.  Therefore, every point not lying in $\mathcal{P}_1$ is incident with $0$
elements of $\mathcal{L}_2$.

\item[Elements of $\mathcal{P}\backslash\overline{\mathcal{P}}$ are incident with 1 element of
  $\mathcal{L}_1$]

Let $X$ be a point not in $\overline{\mathcal{P}}$.  Then $X$ must be a point outside $S$ which is
collinear with a unique point in $S$, since it does not lie in $\mathcal{P}_2 \cup \mathcal{P}_3$.
Therefore, there is a unique line $n$ through $X$ containing a point of $S$.  This line $n$ must lie
in $\mathcal{L}_1$.

\item[Elements of $\mathcal{L}\backslash\overline{\mathcal{L}}$ are incident with 1 element of
  $\mathcal{P}_1$] Every line not lying in the tangent hyperplane $T_A$ through $A$ meets $T_A$ in
  precisely one point. Also, a line of $\GH(q)$ lying in $T_A$ must necessarily meet the $\GH(q)$-plane $\pi_A$.
  So every line not lying in $\mathcal{L}_1\cup \mathcal{L}_2$
  meets $\mathcal{P}_1$ in precisely one point.

\item[Elements of $\mathcal{L}\backslash\overline{\mathcal{L}}$ are incident with 0 elements of
  $\mathcal{P}_2$] Every line which is incident with more than $0$ elements of $\mathcal{P}_2 = S$
  is already contained in $\mathcal{L}_1$, therefore lines not contained in $\mathcal{L}_1 \cup
  \mathcal{L}_2$ are incident with $0$ elements of $S$.

\item[Elements of $\mathcal{L}\backslash\overline{\mathcal{L}}$ are incident with 0 elements of
  $\mathcal{P}_3$] Let $\ell$ be a line not contained in $\mathcal{L}_1$.  For the sake of
  contradiction, assume that $\ell$ contains a point $X$ of $\mathcal{P}_3$.  
  Note that the $\GH(q)$-plane $\pi_X$ must meet $S$, and so $|\Gamma_2(X)\cap S|\ne 0$.
  Since there are at
  least two points collinear with $X$ in $\GH(q)$ which are contained in $S$, the $\GH(q)$-plane
  $\pi_X$ must intersect $S$ in a line $m$.  The lines $m$ and $\ell$ lie in $\pi_X$ and so must
  intersect in a point of $S$ incident with $\ell$, a contradiction.  \qedhere
\end{description}
\end{proof}

For the rest of this section, we assume that $\overline{\mathcal{P}} = \mathcal{P}_1 \cup
\mathcal{P}_2 \cup \mathcal{P}_3, \overline{\mathcal{L}} = \mathcal{L}_1 \cup \mathcal{L}_2$ is a
$1$-good structure in $\GH(q)$ as defined in Theorem \ref{1goodHq}, and we use all the notation
defined there.  To find out the size of this $1$-good structure we will be determining
$|\mathcal{L}_1|$, $|\mathcal{L}_2|$ and $|\mathcal{L}_1 \cap \mathcal{L}_2|$ for different cases
that we describe below.  While an easy count will show that $|\mathcal{L}_2|$ is always equal to
$q^3 + q^2 + q + 1$, the other two quantities depend on how the $4$-space $S$ intersects $\Q(6,q)$
and $\GH(q)$, and where the point $A$ is located with respect to this intersection.   
These intersections have been studied before (see for example \cite[Lemma 4.1]{Ihringer:2014aa}), and the following result is probably 
known but we prove it here for completeness. 
\begin{lem}
\label{lem:4dim_int}
Any $4$-space $S$ in $\PG(6,q)$ intersects $\GH(q)$ inside $\Q(6,q)$ in exactly one of the following
ways.
\begin{enumerate}[$(a)$]
\item $S \cap \Q(6,q) \cong \Q(4,q)$: there are $q+1$ $\GH(q)$-lines contained in $S$ and they are
  pairwise opposite to each other.

\item $S \cap \Q(6,q) \cong P\cdot \Q^-(3,q)$: there is a unique $\GH(q)$-line $\ell$ through $P$ in
  $S$.

\item $S \cap \Q(6,q) \cong P\cdot \Q^+(3,q)$:
\begin{enumerate}[$(i)$]
\item $\pi_P \inc S$: $S$ meets $\GH(q)$ in $q+1$ $\GH(q)$-planes, one through each $\GH(q)$-line
  containing $P$, and hence a total of $(q+1)^2$ lines of $\GH(q)$ are contained in $S$.

\definecolor{zzttqq}{rgb}{0.6,0.2,0.}
\definecolor{xdxdff}{rgb}{0.49019607843137253,0.49019607843137253,1.}
\definecolor{qqqqff}{rgb}{0.,0.,1.}
\begin{center}
\begin{tikzpicture}[line cap=round,line join=round,>=triangle 45,x=1.0cm,y=1.0cm, scale = 0.3]
\clip(-4.3,-9.8) rectangle (28.2,7.5);
\fill[color=zzttqq,fill=zzttqq,fill opacity=0.1] (0.9297875586880295,-5.991543022089227) -- (1.74,0.76) -- (16.12,-3.94) -- cycle;
\fill[color=zzttqq,fill=zzttqq,fill opacity=0.1] (3.7399919964209927,-3.038688612897738) -- (4.550204437732964,3.7128544091914897) -- (16.12,-3.94) -- cycle;
\fill[color=zzttqq,fill=zzttqq,fill opacity=0.1] (5.516711351057874,-1.1717804585330969) -- (6.326923792369845,5.579762563556129) -- (16.12,-3.94) -- cycle;
\fill[color=zzttqq,fill=zzttqq,fill opacity=0.1] (6.997489941397164,0.38416455771174013) -- (7.807702382709133,7.135707579800966) -- (16.12,-3.94) -- cycle;
\draw (7.26,0.66)-- (-0.62,-7.62);
\draw (-0.62,-7.62)-- (15.84,-7.48);
\draw (7.26,0.66)-- (23.72,0.8);
\draw (23.72,0.8)-- (15.84,-7.48);
\draw (2.5404518009147035,-7.5931188789715645)-- (10.420451800914703,0.6868811210284362);
\draw [color=qqqqff] (0.9297875586880295,-5.991543022089227)-- (16.12,-3.94);
\draw [color=qqqqff] (16.12,-3.94)-- (3.7399919964209927,-3.038688612897738);
\draw [color=qqqqff] (5.516711351057874,-1.1717804585330969)-- (16.12,-3.94);
\draw [color=qqqqff] (16.12,-3.94)-- (6.997489941397164,0.38416455771174013);
\draw (1.74,0.76)-- (0.9297875586880295,-5.991543022089227);
\draw [color=zzttqq] (0.9297875586880295,-5.991543022089227)-- (1.74,0.76);
\draw [color=zzttqq] (1.74,0.76)-- (16.12,-3.94);
\draw [color=zzttqq] (16.12,-3.94)-- (0.9297875586880295,-5.991543022089227);
\draw [color=zzttqq] (3.7399919964209927,-3.038688612897738)-- (4.550204437732964,3.7128544091914897);
\draw [color=zzttqq] (4.550204437732964,3.7128544091914897)-- (16.12,-3.94);
\draw [color=zzttqq] (16.12,-3.94)-- (3.7399919964209927,-3.038688612897738);
\draw [color=zzttqq] (5.516711351057874,-1.1717804585330969)-- (6.326923792369845,5.579762563556129);
\draw [color=zzttqq] (6.326923792369845,5.579762563556129)-- (16.12,-3.94);
\draw [color=zzttqq] (16.12,-3.94)-- (5.516711351057874,-1.1717804585330969);
\draw [color=zzttqq] (6.997489941397164,0.38416455771174013)-- (7.807702382709133,7.135707579800966);
\draw [color=zzttqq] (7.807702382709133,7.135707579800966)-- (16.12,-3.94);
\draw [color=zzttqq] (16.12,-3.94)-- (6.997489941397164,0.38416455771174013);
\draw [color=qqqqff] (4.527017487361625,-5.50571229473546)-- (1.3034666624189748,-2.8776553345465734);
\draw [color=qqqqff] (4.527017487361625,-5.50571229473546)-- (3.162048757959804,0.29521354920646187);
\draw [color=qqqqff] (4.527017487361625,-5.50571229473546)-- (6.992798920042276,-0.9568397026563764);
\begin{scriptsize}
\draw [fill=qqqqff] (16.12,-3.94) circle (2.5pt);
\draw[color=qqqqff] (16.3,-4.53) node {$P$};
%\draw [fill=qqqqff] (17.56,3.92) circle (2.5pt);
%\draw[color=black] (18,1.53) node {$m$};
\draw[color=black] (7.64,-2.91) node {}; %{$y$}
\draw [fill=xdxdff] (4.527017487361625,-5.50571229473546) circle (2.5pt);
\draw [fill=xdxdff] (6.671730803876847,-3.2521302819504285) circle (2.5pt);
\draw [fill=xdxdff] (8.027701070109181,-1.8273290377367575) circle (2.5pt);
\draw [fill=xdxdff] (9.157813000354839,-0.6398510196613696) circle (2.5pt);
\end{scriptsize}
\end{tikzpicture}
\end{center}

\item $\pi_P \notinc S$: $S$ meets $\GH(q)$ in two collinear $\GH(q)$-planes, and hence $2q + 1$
  lines of $\GH(q)$ are contained in $S$.

\begin{center}
\definecolor{qqqqff}{rgb}{0.,0.,1.}
\begin{tikzpicture}[line cap=round,line join=round,scale=0.4]
\clip(0.54,-2.6) rectangle (13.12,6.3);
\draw (0.7,-0.26)-- (7.1,-2.32);
\draw (7.1,-2.32)-- (12.82,-0.84);
\draw (0.7,-0.26)-- (0.7,4.6);
\draw (0.7,4.6)-- (6.42,6.08);
\draw (0.7,-0.26)-- (6.42,1.22);
\draw (6.42,1.22)-- (6.42,6.08);
\draw (6.42,1.22)-- (12.82,-0.84);
\draw [color=qqqqff] (0.7,-0.26)-- (6.42,1.22);
\draw [color=qqqqff] (0.7,-0.26)-- (8.544953615292021,0.5360305550778809);
\draw [color=qqqqff] (0.7,-0.26)-- (10.06210735910362,0.04769669378852215);
\draw [color=qqqqff] (0.7,-0.26)-- (8.080109744472088,-2.066405171010718);
\draw [color=qqqqff] (0.7,-0.26)-- (9.701722895207979,-1.6468269431979352);
\draw [color=qqqqff] (0.7,-0.26)-- (11.27712013764561,-1.239206677672115);
\draw [color=qqqqff] (0.7,-0.26)-- (11.759927051561693,-0.49878901972141954);
\draw [color=qqqqff] (6.42,1.22)-- (5.14736683019833,5.750717291729638);
\draw [color=qqqqff] (6.42,1.22)-- (3.465972238368714,5.315671138598898);
\draw [color=qqqqff] (6.42,1.22)-- (1.8803343486247064,4.9054011950987);
\draw [color=qqqqff] (6.42,1.22)-- (0.7,3.7898795539033463);
\draw [color=qqqqff] (6.42,1.22)-- (0.7,2.26888624535316);
\draw [color=qqqqff] (6.42,1.22)-- (0.7,1.0039613382899626);
\begin{scriptsize}
\draw [fill=qqqqff] (0.7,-0.26) circle (2.5pt);
\draw [fill=qqqqff] (6.42,1.22) circle (2.5pt);
\end{scriptsize}
\end{tikzpicture}
\end{center}
\end{enumerate}

\item $S \cap \Q(6, q) \cong m \cdot \Q(2,q)$: 
\begin{enumerate}[$(i)$]
\item $m$ does not lie in $\GH(q)$ and the lines of $\GH(q)$ contained in $S$ are precisely the
  $q+1$ lines on a point $P$, and $m\inc \pi_P$ (but $m\notinc P$).

\item $m$ lies in $\GH(q)$ and the lines of $\GH(q)$ contained in $S$ are precisely the $q^2+q+1$
  lines concurrent with or equal to $m$.
\end{enumerate}

\end{enumerate}

\end{lem}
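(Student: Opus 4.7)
The plan is to proceed in two stages: first classify the projective isomorphism type of $S\cap \Q(6,q)$ by examining the rank of the bilinear form restricted to $S$, and then, for each resulting type, identify the $\GH(q)$-lines lying in $S$ by combining the quadric description with properties P1--P6.

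For the classification, let $S^\perp$ denote the polar of $S$ in $\PG(6,q)$ with respect to the bilinear form associated with $\Q(6,q)$. Since $S$ has vector dimension $5$ in $\mathbb{F}_q^7$, its polar $S^\perp$ has vector dimension $2$ and is therefore a projective line. The radical $R = S \cap S^\perp$ of the restricted form is a totally singular projective subspace of dimension at most $1$. When $\dim R = -1$, the intersection $S \cap \Q(6,q)$ is the non-degenerate parabolic quadric $\Q(4,q)$ of case~(a); when $\dim R = 0$, with $R$ a single point $P$ which necessarily lies on $\Q(6,q)$, the intersection is a cone $P \cdot Q'$ over a non-degenerate $3$-dimensional quadric $Q'$ that is either $\Q^+(3,q)$ or $\Q^-(3,q)$, giving cases~(c) and~(b); when $\dim R = 1$, the intersection is a cone $m \cdot \Q(2,q)$ with vertex the totally singular line $m$, yielding case~(d).

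For the identification of $\GH(q)$-lines, the central tool is the following local principle: by P2, the $\GH(q)$-lines through any point $X \in S \cap \Q(6,q)$ are precisely the $q+1$ lines of $\pi_X$ through $X$, so those also lying in $S$ are exactly the lines of $\pi_X \cap S$ through $X$, giving $q+1$, $1$, or $0$ such lines according as $\pi_X\subseteq S$, $\pi_X\cap S$ is a line through $X$, or $\pi_X\cap S=\{X\}$. The subcases within~(c) and~(d) are then determined by this dichotomy applied to the vertex of the cone. In case~(c) one asks whether $\pi_P \subseteq S$; in case~(d) the key dichotomy is whether $m$ is itself an $\GH(q)$-line, in which case property P5 forces all $q+1$ planes of $\Q(6,q)$ through $m$ contained in $S$ to be $\GH(q)$-planes (yielding $q^2+q+1$ $\GH(q)$-lines), or $m$ is an ideal line, in which case property P6 provides a unique $\GH(q)$-plane through $m$ and property P3 rules out $\GH(q)$-lines in the other planes through $m$. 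For case~(b), the Witt index~$1$ of $\Q^-(3,q)$ means the cone contains no line not through $P$, and since $S \subseteq P^\perp$ the plane $\pi_P$ must meet $S$ in at least a line, so exactly one $\GH(q)$-line through $P$ lies in $S$.

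The main obstacle is case~(a). Here $\Q(4,q)$ has Witt index~$2$, so $S$ contains no totally singular plane of $\Q(6,q)$, whence no $\GH(q)$-plane lies in $S$. Consequently any two $\GH(q)$-lines of $S$ must be disjoint, for otherwise their intersection point $X$ would admit two distinct $\GH(q)$-lines through $X$ lying in $S$, forcing $\pi_X \subseteq S$ via P2. Pairwise \emph{opposition} then follows from the subspace property of $S$: any $\GH(q)$-line $k$ connecting two of the lines $\ell_1,\ell_2 \subseteq S$ at distance~$4$ in $\GH(q)$ would share a point with each, have two points in $S$, and hence lie in $S$, contradicting the disjointness of the $\GH(q)$-lines in $S$. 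The remaining and most delicate task is to prove the exact count of $q+1$; I would extract it from a double count of incidences between $\GH(q)$-lines in $S$ and the $\GH(q)$-planes containing them, exploiting that each $\GH(q)$-line lies in $q+1$ $\GH(q)$-planes by P5 and that each such plane, not being contained in $S$, meets $S$ precisely in that $\GH(q)$-line.
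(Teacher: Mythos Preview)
Your treatment of cases $(b)$, $(c)$, and $(d)$ is sound and close in spirit to the paper's argument: the paper likewise uses the intersection $\pi_P\cap S$ (or the dichotomy $\GH(q)$-line versus ideal line for the radical $m$) together with properties P3, P5, P6 to pin down the $\GH(q)$-lines. Your ``local principle'' is exactly the right organising tool there.

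The genuine gap is in case $(a)$, and it is precisely the step you flag as most delicate. Your proposed double count of incidences $(\ell,\pi)$ with $\ell$ an $\GH(q)$-line in $S$ and $\pi$ an $\GH(q)$-plane on $\ell$ is circular: counting from the $\ell$ side gives $N(q+1)$, but counting from the $\pi$ side only gives the number of points $C\in S\cap\Q(6,q)$ with $\pi_C\cap S$ a line, which is again $N(q+1)$ (since such $C$ are exactly the points lying on one of the pairwise disjoint $\GH(q)$-lines in $S$). No independent equation for $N$ emerges, and more elaborate counts over ideal lines or ideal planes run into the same tautology. The disjointness and opposition of the $\GH(q)$-lines in $S$ follow cleanly from your argument, but the \emph{exact} count $N=q+1$ does not.

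The paper obtains this count by a different mechanism that you are missing: it looks at the polar $S^\perp$, which for $q$ odd is a secant (plus-type) line meeting $\Q(6,q)$ in two opposite points, and for $q$ even is a plane through the nucleus meeting $\Q(6,q)$ in a conic of pairwise opposite points. In either case the $\GH(q)$-lines inside $S$ are identified with the \emph{distance-$3$ trace} of two such opposite points, which by a result of Offer forms one regulus of a $\Q^+(3,q)$-section --- hence exactly $q+1$ lines. This external structural input (or something equivalent) is what your plan needs to complete case~$(a)$. A minor related point: your claim that a point in the radical ``necessarily lies on $\Q(6,q)$'' is automatic only for $q$ odd; for $q$ even the associated bilinear form is alternating with a nucleus, so the classification of $S\cap\Q(6,q)$ requires a little extra care there as well.
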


\begin{proof}\leavevmode
\begin{enumerate}[$(a)$]
\item Suppose $S$ is non-degenerate.  If $q$ is odd, then the perp of $S$ with respect to the
  polarity for $\Q(6,q)$ is a line $m$ of plus type.  So the set of $\GH(q)$-lines within $S$ form
  what is often called the \emph{distance-3 trace} of the two points of $m \cap \Q(6,q)$, and so by
  \cite[Lemma 1.17]{Offer_thesis}, the $\GH(q)$-lines within $S$ form a regulus of a
  $\Q^+(3,q)$-section of $\Q(6,q)$.  For $q$ even, the pole of $S$ with respect to the quadric
  $\Q(6,q)$ is a plane of $\PG(6,q)$ incident with the nucleus $\eta$ of $\Q(6,q)$, meeting
  $\Q(6,q)$ in a conic $\mathcal{C}$.  Any two points of $\mathcal{C}$ are opposite (in $\GH(q)$)
  and so the distance-$3$-trace of two points of $\mathcal{C}$ yield a regulus of a
  $\Q^+(3,q)$-section of $\Q(6,q)$ (see \cite[Lemma 1.17]{Offer_thesis}). Moreover, any point of $S$
  is incident with at most one line of $\GH(q)$, since two concurrent $\GH(q)$-lines span a totally
  singular plane; but $\Q(4,q)$ does not contain any totally singular planes. Therefore, any two
  points of $\mathcal{C}$ will yield the same regulus of $q+1$ $\GH(q)$-lines of $S$.
 
\item Suppose $S\cap \Q(6,q)=P\cdot \Q^-(3,q)$. Therefore, every $\GH(q)$-line contained in $S$ must
  be a generator of the cone $\mathcal{C}=P\cdot \Q^-(3,q)$.  However, each such $\GH(q)$-line is
  incident with $P$, and the quotient polar space $\Q^-(3,q)$ does not contain any totally isotropic
  lines. Therefore, there can be at most one $\GH(q)$-line of $S$ incident with $P$ since two such
  lines would span a totally isotropic plane.  Now the unique $\GH(q)$-plane $\pi_P$ incident with
  $P$ is not contained in $S$ (by the above argument) and so meets $S$ in a line $\ell$, and $\ell$
  is an $\GH(q)$-line. Therefore, there is a unique $\GH(q)$-line within $S$, and it is incident
  with $P$.

\item Suppose $S\cap \Q(6,q)=P\cdot \Q^+(3,q)$. We have two subcases:
\begin{enumerate}[(i)]
\item $\pi_P\inc S$: In this case, each $\GH(q)$-line incident with $P$ is contained in $S$.  Now
  $S/P$ is isometric to $\Q^+(3,q)$ and so for each $\GH(q)$-line $z$ incident with $P$, there are
  two totally isotropic planes incident with $z$ and contained in $S$; one of which is the
  $\GH(q)$-plane $\pi_P$ with centre $P$. Since $z$ lies in $\GH(q)$, it follows that the two
  totally isotropic planes on $z$ are $\GH(q)$-planes.

\item $\pi_P\notinc S$: So $\pi_P$ meets $S$ in an $\GH(q)$-line $\ell$ on $P$, since both $S$ and
  $\pi_P$ lie in the tangent hyperplane $T_P$ to $P$. Now $\ell$ is incident with $S$ and contains
  the radical $P$ of $S$. So the quotient space $S/P$ is non-degenerate and isomorphic to $\Q^+(3,q)$
  and the image $\ell/P$ of $\ell$ is a point of $\Q^+(3,q)$. There are precisely two totally
  singular lines of $S/P$ ($\cong\Q^+(3,q)$) incident with $\ell/P$, which implies that there are
  precisely two totally isotropic planes incident with both $\ell$ and $S$. Since $\ell\in\GH(q)$,
  we have shown that there are precisely two $\GH(q)$-planes incident with $\ell$ and contained in
  $S$. The result then follows.
\end{enumerate}

\item Suppose $S \cap \Q(6, q) \cong m \cdot \Q(2,q)$. We have two subcases:
\begin{enumerate}[$(i)$]
\item $m$ does not lie in $\GH(q)$: Suppose $X$ is a point of $m$. Then there cannot be two
  $\GH(q)$-lines incident with $X$ in $S$ since otherwise, $X$ would be the centre of an
  $\GH(q)$-plane $\pi_X$, and $\pi_X$ would have to contain $m$ since it would be incident with
  $S$. However, this would imply that $m$ is a line of $\GH(q)$; a contradiction. On the other hand,
  the $\GH(q)$-plane $\pi_X$ incident with $X$ meets $S$ in a line, which is necessarily in
  $\GH(q)$. So there is a unique $\GH(q)$-line on each point of $m$. Moreover, since $S$ meets
  $\Q(6,q)$ in a cone of type $m\cdot \Q(2,q)$, it is clear that we have accounted for all of the
  $\GH(q)$-lines of $S$, and there are $q+1$ of them. Let $\pi$ be the span of these lines.  Then
  $m$ and $S$ are both incident with $\pi$, and hence $\pi$ is totally isotropic. Therefore, $\pi$
  is a plane and so must be an $\GH(q)$-plane (as it contains $\GH(q)$-lines).

\item $m$ lies in $\GH(q)$: This part follows from the basic properties of $\GH(q)$; every line of
  $\GH(q)$ concurrent with or equal to $m$ lies in $S$ and vice-versa.\qedhere
\end{enumerate}
\end{enumerate}
\end{proof}

Since there are $q+1$ lines of $\GH(q)$ incident with the $\GH(q)$-plane $\pi_A$ incident with $A$,
and each point of $\pi_A$, apart from $A$, is incident with $q$ lines of $\GH(q)$ not incident with
$\pi_A$, it follows that
\[
|\mathcal{L}_2|=q+1+(q^2+q)q=q^3+q^2+q+1.
\]
We now calculate the cardinality of $\mathcal{L}_1$.

\begin{lem}
\label{lem:size_L1}
\[
|\mathcal{L}_1|=
\begin{cases}
q^4 +  2q^3 + q^2 + q + 1 & S \cap \Q(6,q) \cong \Q(4,q),\\
q^4+q^3+q^2+q+1 & S \cap \Q(6,q) \cong P \cdot \Q^-(3,q), \\
q^4+2q^3+q^2+q+1 & S \cap \Q(6,q) \cong P \cdot \Q^+(3,q) \text{ and } \pi_P \inc S,\\
q^4+3q^3+q^2+q+1 & S  \cap \Q(6,q) \cong P \cdot \Q^+(3,q) \text{ and } \pi_P \notinc S,\\
q^4+2q^3+q^2+q+1 & S \cap \Q(6,q) \cong m \cdot \Q(2,q) \text{ and } m \not\in \GH(q),\\
q^4 + q^3 + q^2 + q + 1 & S \cap \Q(6,q) \cong m \cdot \Q(2,q) \text{ and } m \in \GH(q).\\
\end{cases}
\]
\end{lem}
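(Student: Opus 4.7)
The plan is to count $|\mathcal{L}_1|$ by a double count of incidences. Consider the set of flags $(X,n)$ with $X$ a point of $S \cap \Q(6,q)$ and $n$ a $\GH(q)$-line through $X$. Since every point of $\Q(6,q)$ lies on exactly $q+1$ lines of $\GH(q)$, the total number of such flags equals $(q+1)\,|S \cap \Q(6,q)|$. On the other hand, a projective line in $\PG(6,q)$ meets the $4$-space $S$ in either $0$, $1$, or $q+1$ of its points, so each line of $\mathcal{L}_1$ contributes either $1$ or $q+1$ flags to the count, with the latter occurring precisely when the line is contained in $S$. Letting $N$ denote the number of $\GH(q)$-lines contained in $S$, equating the two counts yields the master identity
\[
|\mathcal{L}_1| \;=\; (q+1)\,|S \cap \Q(6,q)| \;-\; q\,N.
\]

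It then remains only to plug in values. The six values of $N$ are already supplied by Lemma \ref{lem:4dim_int}: $q+1$, $1$, $(q+1)^2$, $2q+1$, $q+1$ and $q^2+q+1$. For the point counts I would use the standard formulas $|\Q(4,q)| = (q+1)(q^2+1)$, $|\Q^-(3,q)| = q^2+1$, $|\Q^+(3,q)| = (q+1)^2$ and $|\Q(2,q)| = q+1$, together with the elementary fact that a point-vertex cone over $B$ has $1 + q\,|B|$ points, and a line-vertex cone $m \cdot B$ has $(q+1) + q^2\,|B|$ points (from the $|B|$ planes spanned by $m$ and the points of $B$, each of size $q^2+q+1$, all meeting in $m$). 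Substituting each pair $(|S \cap \Q(6,q)|,\,N)$ into the master identity produces the six stated expressions after routine algebra.

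There is essentially no geometric obstacle here; the crux is the observation that any projective line either lies entirely in $S$ or meets it in at most one point, which is what makes the double count decompose cleanly into just two types of contribution. The remainder is bookkeeping, with one small sanity check: cases $(a)$ and $(d)(i)$ share both ingredients and so obviously agree on $q^4 + 2q^3 + q^2 + q + 1$, whereas cases $(b)$ and $(d)(ii)$ arrive at the same value $q^4 + q^3 + q^2 + q + 1$ from genuinely different $(|S \cap \Q(6,q)|,\,N)$ combinations, which is worth verifying directly.
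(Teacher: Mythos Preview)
Your proposal is correct and follows essentially the same approach as the paper: the paper also derives the identity $|\mathcal{L}_1| = (q+1)x - qy$ (with $x = |S \cap \Q(6,q)|$ and $y$ the number of $\GH(q)$-lines in $S$) by the same double count, and then substitutes the same values of $x$ and $y$ in each case. Your write-up is slightly more explicit about the cone point-counts, but the argument is identical.
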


\begin{proof}
Let $x$ denote the number of points of $\Q(6, q)$ contained in $S$, and let $y$ denote the number of $\GH(q)$-lines contained in $S$. 
Then through each of the $x$ points of $S \cap \Q(6, q)$ there are $q + 1$ elements of $\mathcal{L}_1$ and  this way we have counted each of the $y$ lines exactly $q + 1$ times.
Therefore, $|\mathcal{L}_1| = x(q + 1) - yq$. 
We can now go through each case and plug in the values of $x$, $y$ to get the result. 
\begin{enumerate}
\item $S \cap \Q(6,q) \cong \Q(4,q)$: $x = q^3 + q^2 + 2 + 1$ and $y = q + 1$. 

\item $S \cap \Q(6,q) \cong P \cdot \Q^-(3,q)$: $x = q^3 + q + 1$ and $y = 1$.

\item $S \cap \Q(6,q) \cong P \cdot \Q^+(3,q) \text{ and } \pi_P \inc S$:  $x = q^3 + 2q^2 + q + 1$ and $y = (q + 1)^2$. 

\item $S  \cap \Q(6,q) \cong P \cdot \Q^+(3,q) \text{ and } \pi_P \notinc S$: $x = q^3 + 2q^2 + q + 1$ and $y = 2q + 1$. 

\item $S \cap \Q(6,q) \cong m \cdot \Q(2,q) \text{ and } m \not\in \GH(q)$: $x = q^3 + q^2 + q + 1$ and $y = q + 1$.

\item $S \cap \Q(6,q) \cong m \cdot \Q(2,q) \text{ and } m \in \GH(q)$: $x = q^3 + q^2 + q + 1$ and $y = q^2 + q + 1$. 
\end{enumerate}
\end{proof}

We now determine $|\mathcal{L}_1 \cap \mathcal{L}_2|$, which will finally allow us to compute the
sizes of the $1$-good structures obtained by our construction.  In the following lemmas we will
refer to the different cases by how they appear in Lemma \ref{lem:4dim_int}.  For example, ($c$)(i)
will refer to the case when $S \cap \Q(6,q) \cong P \cdot \Q^+(3,q)$ and $\pi_P \inc S$.  To reduce
the number of cases that we have to study, we first give a sufficient condition for $\mathcal{L}_1
\cap \mathcal{L}_2 = \mathcal{L}_2$.

\begin{lem}\label{L2inL1}
If $\pi_A$ is contained in $S$ or $S$ is contained in the tangent space $T_A$ at $A$, then
$\mathcal{L}_2 \subseteq \mathcal{L}_1$.
\end{lem}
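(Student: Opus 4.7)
The plan is to verify the inclusion $\mathcal{L}_2 \subseteq \mathcal{L}_1$ directly, by showing that every $\GH(q)$-line $n$ meeting $\pi_A$ must also meet $S$.

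For the first hypothesis $\pi_A \subseteq S$, the conclusion is immediate from the definitions: $n \cap S \supseteq n \cap \pi_A \neq \varnothing$.

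For the second hypothesis $S \subseteq T_A$, I would fix an arbitrary $n \in \mathcal{L}_2$ and a point $Q \in n \cap \pi_A$, and dispose first of the trivial subcase $Q \in S$. In the remaining subcase $Q \notin S$, I would argue in three steps. First, I use a counting argument inside $\pi_A$: the $q+1$ $\GH(q)$-lines through $A$ all lie in $\pi_A$ (by P2) and together cover $\pi_A$, because each contributes $q$ points other than $A$ and $q(q+1) = |\pi_A \setminus \{A\}|$. Since $A \in S$ forces $Q \neq A$, this shows $Q$ is collinear with $A$ in $\GH(q)$, so $A$ lies on an $\GH(q)$-line through $Q$; by P2 this gives $A \in \pi_Q$ and $n \subseteq \pi_Q$. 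Second, I observe that $\pi_Q$ is a totally singular plane of $\Q(6,q)$ containing $A$, so every line of $\pi_Q$ through $A$ is totally singular, whence $\pi_Q \subseteq T_A$. Third, I perform the dimension count described below, and finish with the fact that two lines in a projective plane always meet.

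The main (and only) obstacle is the third step: a Grassmann computation in the right ambient. Both $\pi_Q$ (projective dimension $2$) and $S$ (projective dimension $4$) lie inside the $5$-dimensional hyperplane $T_A$, so their intersection taken in $T_A$ has projective dimension at least $2 + 4 - 5 = 1$. Because $A$ lies in $\pi_Q \cap S$, this intersection contains a line $k'$ through $A$. Now $n$ and $k'$ are two lines in the projective plane $\pi_Q$, so they meet in a point, which then lies in $S$, giving $n \cap S \neq \varnothing$ as required.

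The subtlety worth flagging is that the Grassmann count must be carried out inside $T_A$ rather than $\PG(6,q)$: the ambient $6$-space gives only the vacuous bound $2 + 4 - 6 = 0$, which would not force a line through $A$ inside $\pi_Q \cap S$ and would break the final ``two lines in a plane meet'' step. Once one notices that both $\pi_Q$ and $S$ sit inside the tangent hyperplane at $A$, the rest of the argument is essentially automatic.
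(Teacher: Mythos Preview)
Your proof is correct and rests on the same core idea as the paper's: show that any $n\in\mathcal{L}_2$ lies inside $T_A$, then use a Grassmann count against the $4$-space $S\subseteq T_A$. The difference is only in execution. The paper argues more directly: if $n$ meets $\pi_A$ at $X$, then $AX$ is an $\GH(q)$-line, so $\dist(n,A)\le 3$, hence every point of $n$ is at distance at most $4$ from $A$ and therefore (by P4) $n\subseteq T_A$; since $S$ is a hyperplane of $T_A$, the line $n$ must meet $S$. Your detour through $\pi_Q$ (establishing $n\subseteq\pi_Q\subseteq T_A$ and then intersecting $\pi_Q$ with $S$) reaches the same destination but adds an unnecessary layer: once you know $n\subseteq T_A$, the count $1+4-5=0$ for $n\cap S$ already finishes the job, without ever invoking $\pi_Q$ or the ``two lines in a plane meet'' step.
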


\begin{proof}
Suppose $S$ lies in the tangent hyperplane $T_A$ at $A$, and suppose $n$ meets $\pi_A$ in at least a
point $X$. Then $AX$ is a line of $\GH(q)$ (since $\pi_A$ is an $\GH(q)$-plane) and hence
$\dist(n,A) \le 3$. This means that $n$ is contained in $T_A$ and so $n$ meets $S$ nontrivially as
$S$ is a hyperplane of $T_A$. Suppose $\pi_A$ is contained in $S$. Then any line $n$ which meets
$\pi_A$ nontrivially must meet $S$ nontrivially.
\end{proof}

It turns out that the converse of Lemma \ref{L2inL1} holds, which will be a consequence of the next
result.

\begin{lem}
\label{lem:size_L1L2}
Either $\mathcal{L}_2 \subseteq \mathcal{L}_1$ or
\[
|\mathcal{L}_1 \cap \mathcal{L}_2|=
\begin{cases}
2q^2 + q + 1 & (a) \text{ and }  \dim(\pi_A \cap S) = 1, \\
q^2 + 2q + 1 & (a) \text{ and }  \dim(\pi_A \cap S) = 0, \\
q^2+q+1 & (b) \text{ and } \dim(\pi_A \cap S) = 1, \\
2q+1 & (b) \text{ and } \dim(\pi_A \cap S) = 0, \\
2q^2 + 2q + 1 & (c) \text{ and } \dim(\pi_A \cap S) = 0,\\
2q^2 + q + 1 & (c),\, \dim(\pi_A \cap S) = 1 \text{ and } A \not\in \pi_P,\\
q^2 + q + 1 & (c),\, \dim(\pi_A \cap S) = 1 \text{ and } A \in \pi_P,\\
q^2 + 2q + 1 & (d) \text{ and } \dim(\pi_A \cap S) = 0,\\
q^2 + q + 1 & (d) \text{ and } \dim(\pi_A \cap S) =1.
\end{cases}
\]
\end{lem}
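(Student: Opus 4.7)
The plan is to organize the count around the point $A$, which lies in both $\pi_A$ and $S$. Since the $q+1$ $\GH(q)$-lines through $A$ are exactly the $\GH(q)$-lines of $\pi_A$ and each of them contains $A \in S$, all $q+1$ lie in $\mathcal{L}_1 \cap \mathcal{L}_2$. Every other line $n \in \mathcal{L}_2$ meets $\pi_A$ in a unique point $X \neq A$, and I split according to whether $X \in S$. If $\dim(\pi_A \cap S) = 1$, the line $\ell_0 := \pi_A \cap S$ is an $\GH(q)$-line through $A$, and for each of its $q$ points $X \neq A$ the $q$ $\GH(q)$-lines through $X$ not contained in $\pi_A$ automatically meet $S$ at $X$, contributing $q^2$ lines; if $\dim(\pi_A \cap S) = 0$ this contribution vanishes.

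For $X \in \pi_A$ with $X \notin S$, the $\GH(q)$-lines through $X$ are the $q+1$ lines of the $\GH(q)$-plane $\pi_X$, one of which is $AX \subseteq \pi_A$. Each of the remaining $q$ lines $n$ satisfies $n \cap S \subseteq \pi_X \cap S$; since $A \in \pi_X \cap S$ while $AX \cap S = \{A\}$, two subcases arise. If $\dim(\pi_X \cap S) = 0$ then $\pi_X \cap S = \{A\}$ and none of the $q$ remaining lines meets $S$. If $\dim(\pi_X \cap S) \geq 1$, the intersection contains a line $m$ through $A$ different from $AX$ and, since $X \notin m$, every line through $X$ in $\pi_X$ meets $m$ in a single point of $S$, so all $q$ lines meet $S$. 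Writing $N$ for the number of points $X \in \pi_A \setminus (\pi_A \cap S)$ with $\dim(\pi_X \cap S) \geq 1$, I obtain
\[
|\mathcal{L}_1 \cap \mathcal{L}_2| = (q+1) + q^2 \cdot \bigl[\dim(\pi_A \cap S) = 1\bigr] + qN,
\]
so the nine values in the statement correspond exactly to $N = q,\, q+1,\, 0,\, 1,\, 2q+1,\, q,\, 0,\, q+1,\, 0$, respectively.

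It is convenient to pass to the quotient $\PG(6,q)/A \cong \PG(5,q)$, in which $S' := S/A$ is a $3$-space, $\pi_A' := \pi_A/A$ is a line, and $\pi_X' := \pi_X/A$ is a line through the image $X'$ of $X$. The condition $\dim(\pi_X \cap S) \geq 1$ becomes $\pi_X' \cap S' \ne \emptyset$, so $N$ counts points $X' \in \pi_A' \setminus (\pi_A' \cap S')$ for which the associated line $\pi_X'$ meets $S'$. The main obstacle is performing this count in each of the nine configurations by identifying, via Lemma \ref{lem:4dim_int}, which $\GH(q)$-planes $\pi_X$ for $X \in \pi_A$ share more than just the point $A$ with $S$. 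In case (a) one uses that the $q+1$ $\GH(q)$-lines of $S$ are pairwise opposite and are not contained in any totally singular plane; in case (c)(i)--(ii) the totally singular $\GH(q)$-planes inside $S$ determine which $\pi_X$'s lie inside $S$; cases (b) and (d) reduce to smaller incidence counts within the cone. The converse of Lemma \ref{L2inL1} (namely $\mathcal{L}_2 \not\subseteq \mathcal{L}_1$ whenever neither $\pi_A \subseteq S$ nor $S \subseteq T_A$) is then automatic, since each listed value is strictly smaller than $|\mathcal{L}_2| = q^3 + q^2 + q + 1$.
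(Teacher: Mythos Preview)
Your decomposition of $\mathcal{L}_1\cap\mathcal{L}_2$ by the point of $\pi_A$ that each line meets is a valid alternative to the paper's classification, and the reduction to the single parameter $N$ is correctly derived. The paper instead sorts the lines of $\mathcal{L}_1\cap\mathcal{L}_2$ into three types---those through $A$; those meeting an $\GH(q)$-line through $A$ that lies in $S$; and those of the form $\ell_X$ for $X\in S$ with $\dist(X,A)=4$---and then computes parameters $\alpha$ and $\beta$, the latter by way of the number $\gamma$ of ideal lines through $A$ contained in $S$. Your parameter is tied to theirs by $qN=\beta$, and when $\dim(\pi_A\cap S)=0$ there is a direct bijection giving $N=\gamma$: each ideal line $m$ through $A$ in $S$ corresponds to the centre of the unique $\GH(q)$-plane containing $m$, which is a point of $\pi_A\setminus\{A\}$. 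So the two set-ups are essentially dual reorganisations of the same count, yours looking outward from $\pi_A$ and the paper's looking inward from $S$.

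The genuine gap is that you do not carry out the case analysis. You announce the nine target values of $N$ and give one-line hints, but this analysis is precisely where the paper expends most of its effort, and several subcases are not routine. In case $(c)$ with $\dim(\pi_A\cap S)=1$ and $A\notin\pi_P$, for instance, you need $N=q$: there are $2q$ ideal lines through $A$ in $S$, and one must show that exactly $q$ of them lie in the $\GH(q)$-plane $\langle P,\ell_0\rangle\subseteq S$ (so that their associated centres fall on $\ell_0=\pi_A\cap S$ and do not contribute to $N$), while the other $q$ do not. Establishing this requires identifying the two totally singular planes of $S$ through the line $PA$ and determining which centres land on $\ell_0$; none of this appears in your outline. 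The quotient picture in $\PG(6,q)/A$ is a reasonable bookkeeping device but does not bypass the need to invoke the detailed conclusions of Lemma~\ref{lem:4dim_int} in each configuration. As written you have a correct framework and a correct reduction, but the proof proper---the nine determinations of $N$---remains to be done.
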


\begin{proof}
There are three types of lines in $\mathcal{L}_1 \cap \mathcal{L}_2$:
\begin{enumerate}[(1)]
\item $\GH(q)$-lines through $A$, 
\item $\GH(q)$-lines that intersect all those $\GH(q)$-lines through $A$ which are contained in $S$, and
\item for each point $X \in S \cap \Q(6,q)$ with $\dist(X,A) = 4$, the unique $\GH(q)$-line $\ell_X$
  through $X$ which is at distance $3$ from $A$.
\end{enumerate}
Therefore, for each case we just need to count the lines of type (2) which are not in (1) and the
lines of type (3) which are not in (2) (no line of type (3) can be of type (1)).  We will denote
these numbers by $\alpha$ and $\beta$ respectively, so that $|\mathcal{L}_1 \cap \mathcal{L}_2| = q
+ 1 + \alpha + \beta$.

By Lemma \ref{L2inL1}, we can assume that $\pi_A$ is not contained in $S$ and $S$ is not contained
in the tangent space $T_A$ at $A$.  Since $\pi_A$ is not contained in $S$, it must intersect $S$ in
the point $A$ or a line $\ell$ through $A$.  Therefore $\alpha = 0$ or $q^2$ depending on whether
$\pi_A \cap S$ is a point or a line.

To determine $\beta$ we will first compute the parameter $\gamma$ which is the total number of ideal
lines through $A$ contained in $S$.  This is because every point $X$ with $\dist(X,A) = 4$ is
collinear with $A$ in $\Q(6,q)$ with $AX$ being an ideal line.  Therefore, $q\gamma$ is the total
number of points at distance $4$ from $A$ which are contained in $S \cap \Q(6,q)$.  For two distinct
points $X, Y \in S \cap \Q(6,q)$ with $\dist(X,A) = \dist(Y,A) = 4$, we have $\ell_X = \ell_Y$ if
and only if $X$ and $Y$ are collinear in $\GH(q)$ and there is an $\GH(q)$-line $\ell$ through $A$
contained in $S$ which is concurrent with the $\GH(q)$-line $XY$ ($= \ell_X = \ell_Y$).  Therefore,
if $\pi_A \cap S = \{A\}$, then no line of type (3) is of type (2) and we have $\beta = q \gamma$.
In contrast, if $\pi_A \cap S$ is a line $\ell$ through $A$, then $\beta$ is equal to the total number
of points at distance $4$ from $A$ in $S \cap \Q(6,q)$ which are not incident with any of the
$\GH(q)$-line inside $S$ that is concurrent with $\ell$.  We now look at the different cases.

\begin{enumerate}[$(a)$]
\item $S \cap \Q(6,q) \cong \Q(4,q)$.\\ If $A$ lies on one of the $q + 1$ $\GH(q)$-lines in $S \cap
  \Q(6,q)$, then $\pi_A \cap S$ is that line through $A$, and otherwise $\pi_A \cap S = \{A\}$.
  Therefore, in the first case we have $\alpha = q^2$ and in the second $\alpha = 0$.  There are $q
  + 1$ totally singular lines in $\Q(4,q)$ through any point and no two $\GH(q)$-lines in $S$ are
  concurrent.  Therefore, in the first case, we have $\gamma = q$ and $\beta = q\gamma = q^2$, while
  in the second case we have $\gamma = q + 1$ and $\beta = q \gamma = q^2 + q$.  To summarise,
\begin{center}
\begin{tabular}{lllll}
\toprule
Case & $\alpha$ & $\gamma$ & $\beta$ & $|\mathcal{L}_1 \cap \mathcal{L}_2|$\\
\midrule
$\dim(\pi_A\cap S)=1$ & $q^2$ & $q$ & $q^2$ & $2q^2+q+1$\\
$\dim(\pi_A\cap S)=0$ & $0$ & $q+1$ & $q^2 + q$ & $q^2+2q+1$\\
\bottomrule
\end{tabular}
\end{center}

\item $S \cap \Q(6,q) \cong P \cdot \Q^-(3,q)$.\\ Let $\ell$ be the unique $\GH(q)$-line contained
  in $S$. The cases $\dim(\pi_A \cap S) = 1$ and $\dim(\pi_A \cap S) = 0$ correspond to
  $A\in\ell\setminus\{P\}$ and $A\notin\ell$, respectively.  In the former case, $\alpha=q^2$ and in
  the latter we have $\alpha = 0$.  In both cases, $A$ lies on a generator of the cone $P\cdot
  \Q^-(3,q)$ but is not the vertex of the cone.  So $A$ lies on a unique totally singular
  line contained in $S$. In the first case this means that $\gamma=0$ as $\ell\in\GH(q)$ and in the
  latter case it means $\gamma=1$.  Since there is only one $\GH(q)$-line in $S$, we must have
  $\beta = q\gamma$ in both cases.

\begin{center}
\begin{tabular}{lllll}
\toprule
Case & $\alpha$ & $\gamma$ & $\beta$ & $|\mathcal{L}_1 \cap \mathcal{L}_2|$\\
\midrule
$\dim(\pi_A \cap S) = 1$ & $q^2$ & $0$ & $0$ & $q^2+q+1$\\
$\dim(\pi_A \cap S) = 0$ & $0$ & $1$ & $q$ & $2q+1$\\
\bottomrule
\end{tabular}
\end{center}

\item $S \cap \Q(6,q) \cong P \cdot \Q^+(3,q)$.\\ We let $\pi_P$ be the unique $\GH(q)$-plane
  incident with $P$.  Note that the number of totally singular lines in $S$ incident with $A$ is
  $2q+1$ for every $A \neq P$ in $S \cap \Q(6,q)$.
\begin{description}
\item[$\pi_A$ meets $S$ in a point (namely $A$)] So $\alpha=0$ here.  We must be in case ($c$)(ii)
  of Lemma \ref{lem:4dim_int} and in fact $A$ is a point of $S \cap \Q(6,q)$ not lying on any of the
  two $\GH(q)$-planes contained in $S$.  Since none of the totally singular lines through $A$ in $S$
  are $\GH(q)$-lines, we have $\gamma = 2q + 1$.  Moreover, we have $\beta = q \gamma = 2q^2 + q$
  since $\pi_A \cap S$ is a point here.

\item[$\pi_A$ meets $S$ in a line and $A\notin\pi_P$] Let $\ell = \pi_A \cap S$.  Then $\ell$ is not
  incident with $P$ because $P\notin \pi_A$.  The parameter $\alpha$ is equal to $q^2$.  Since the
  line $\ell$ is the unique totally singular line through $A$ in $S$ which is contained in $\GH(q)$,
  and there are in total $2q + 1$ totally singular lines through $A$ in $S$, we have $\gamma = 2q$.
  This gives us $2q^2$ points of $S$ at distance $4$ from $A$.  The plane $\pi_1 = \langle P, \ell
  \rangle$ must be an $\GH(q)$-plane contained in $S$.  Denote the second plane through the line
  $PA$ contained in $S \cap \Q(6,q)$ by $\pi_2$.  Then for each of the $q^2$ points $X \in \pi_1
  \setminus \ell$, the unique $\GH(q)$-line $\ell_X$ with $\dist(A,\ell_X) = 3$ is already counted
  in the parameter $\alpha$.  The remaining $q^2$ points in $\pi_2 \setminus PA$ which are at
  distance $4$ from $A$ give rise to precisely $q^2$ elements of $\mathcal{L}_1 \cap \mathcal{L}_2$.
  Therefore, we have $\beta = q^2$.

\item[$\pi_A$ meets $S$ in a line and $A\in\pi_P$] Let $\ell = \pi_A \cap S$ and note that $P$ is
  incident with $\ell$.  We again have $\alpha = q^2$.  The $2q + 1$ totally singular lines through
  $A$ that are contained in $S$ all lie in the two totally singular planes $\pi_1$ and $\pi_2$
  through the line $\ell$.  Since these planes contain the $\GH(q)$-line $\ell$, they must be
  $\GH(q)$-planes with centres on the line $\ell$.  This shows that $\gamma = 0$ since for any $X
  \in (\pi_1 \cup \pi_2) \setminus \ell$ the unique $\GH(q)$-line $\ell_X$ at distance $3$ from $A$ is
  contained in $S$.  Therefore, $\beta = 0$.
\end{description}

\begin{center}
\begin{tabular}{lllll}
\toprule
Case & $\alpha$ & $\gamma$ & $\beta$ & $|\mathcal{L}_1 \cap \mathcal{L}_2|$\\
\midrule
$\dim(\pi_A\cap S)=0$ & $0$ & $2q + 1$ &$2q^2+q$ & $2q^2+2q+1$\\
$\dim(\pi_A\cap S)=1$ and $A\notin \pi_P$& $q^2$ & $2q$  & $q^2$ & $2q^2+q+1$\\
$\dim(\pi_A\cap S)=1$ and $A\in \pi_P$& $q^2$ & $0$ &  $0$ &$q^2+q+1$\\
\bottomrule
\end{tabular}
\end{center}

\item $S\cap \Q(6,q)=m\cdot \Q(2,q)$. \\ Say $\dim(\pi_A \cap S) = 0$.  Then $\alpha = 0$.
  Moreover, $\beta = q \gamma$, where $\gamma$ is the total number of ideal lines through $A$
  contained in $S$.  There are $q + 1$ totally singular lines through $A$ contained in $S$ (all
  lines joining $A$ to a point of $m$) and none of these are $\GH(q)$-lines.  Therefore, $\gamma =
  q+1$ and $\beta = q^2 + q$.

Say $\dim(\pi_A \cap S) = 1$ and let $\ell = \pi_A \cap S$.  Here we have $\alpha = q^2$.  Since all
the $q + 1$ totally singular lines through $A$ contained in $S$ lie in the $\GH(q)$-plane $\langle
\ell, m \rangle$, we must have $\gamma = 0$, which gives us $\beta = 0$.

\begin{center}
\begin{tabular}{lllll}
\toprule
Case & $\alpha$ & $\gamma$ & $\beta$ & $|\mathcal{L}_1 \cap \mathcal{L}_2|$\\
\midrule
$\dim(\pi_A\cap S)=0$  & $0$ & $q + 1$ & $q^2 +q$ & $q^2+2q+1$\\
$\dim(\pi_A\cap S)=1$ & $q^2$ & $0$ & $0$ & $q^2+q+1$\\
\bottomrule
\end{tabular}
\end{center}
\end{enumerate}
\end{proof}

\begin{cor}
The possible sizes of the $1$-good structures that can be obtained via this construction are $q^4 + q^3 + q^2 + q + 1 + k$ for 
$
k\in \{ 0, q^3-q, q^3, q^3+q^2-q, 2q^3-q^2-q, 2q^3-q^2, 2q^3-q, 2q^3, 3q^3-q^2-q, 3q^3-q^2, 3q^3\}.
$
%$k \in \{0, q^3, q^3 +
%q^2 - q, 2q^3 - q^2, 2q^3 - q, 2q^3, 3q^3 - q^2 - q, 3q^3 - q^2, 3q^3\}$.
\end{cor}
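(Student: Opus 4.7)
The plan is to apply inclusion-exclusion:
\[
|\overline{\mathcal{L}}| = |\mathcal{L}_1| + |\mathcal{L}_2| - |\mathcal{L}_1 \cap \mathcal{L}_2|,
\]
and then subtract $q^4+q^3+q^2+q+1$ to read off $k$, for every admissible choice of $(S,A)$. The value $|\mathcal{L}_2| = q^3+q^2+q+1$ was computed just before Lemma \ref{lem:size_L1}; the possible values of $|\mathcal{L}_1|$ come from Lemma \ref{lem:size_L1} (three distinct values, depending on which of the six cases of Lemma \ref{lem:4dim_int} holds); and the possible values of $|\mathcal{L}_1 \cap \mathcal{L}_2|$ come either from Lemma \ref{lem:size_L1L2} or, in the degenerate case $\mathcal{L}_2 \subseteq \mathcal{L}_1$ (equivalently, $\pi_A \subseteq S$ or $S \subseteq T_A$, by Lemma \ref{L2inL1} together with its converse), equal $|\mathcal{L}_2|$ itself.

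Concretely, I would partition the positions of $A \in S \cap \Q(6,q)$ in each case of Lemma \ref{lem:4dim_int} according to the invariants used by Lemma \ref{lem:size_L1L2}: the value of $\dim(\pi_A \cap S)$, whether $A \in \pi_P$ in case (c), and whether one of the two sufficient conditions $\pi_A \subseteq S$ or $S \subseteq T_A$ holds (these are determined, respectively, by whether $\pi_A$ is among the $\GH(q)$-planes of $S$ and by whether $A$ is a singular point of $S \cap \Q(6,q)$). For each admissible combination of $|\mathcal{L}_1|$ and $|\mathcal{L}_1 \cap \mathcal{L}_2|$ I would substitute into the inclusion-exclusion formula to compute a value of $k$; the union of these $k$'s across all admissible combinations is the claimed eleven-element set.

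The main obstacle will be confirming which formal pairings of Lemma \ref{lem:size_L1} and Lemma \ref{lem:size_L1L2} entries actually materialise for some choice of $(S,A)$. For instance, in case (c)(i), where $\pi_P \subseteq S$, a direct count using the two reguli of $\Q^+(3,q)$ shows that the $\GH(q)$-planes contained in $S$ already cover every point of $S \cap \Q(6,q)$, so $\dim(\pi_A \cap S) = 0$ does not occur there; consequently the Lemma \ref{lem:size_L1L2} entry $|\mathcal{L}_1 \cap \mathcal{L}_2| = 2q^2+2q+1$ is paired only with the value of $|\mathcal{L}_1|$ from case (c)(ii). Analogous exclusions and explicit witnesses must be checked in the other cases of Lemma \ref{lem:4dim_int}, after which the remaining admissible combinations yield precisely the eleven values of $k$ listed in the statement.
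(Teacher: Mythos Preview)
Your proposal is correct and follows essentially the same approach as the paper: the paper's proof simply states that one computes $|\mathcal{L}_1 \cup \mathcal{L}_2| = |\mathcal{L}_1| + |\mathcal{L}_2| - |\mathcal{L}_1 \cap \mathcal{L}_2|$ by combining each case of Lemma \ref{lem:size_L1} with the appropriate case of Lemma \ref{lem:size_L1L2}, using $|\mathcal{L}_2| = q^3+q^2+q+1$ throughout. If anything, your discussion of which pairings are admissible (and the degenerate case $\mathcal{L}_2\subseteq\mathcal{L}_1$) is more explicit than the paper's own proof, which leaves these checks implicit.
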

\begin{proof}
Starting with one of the cases of Lemma \ref{lem:size_L1} and then using the appropriate case of
Lemma \ref{lem:size_L1L2}, we can calculate the different possibles values of $|\mathcal{L}_1 \cup
\mathcal{L}_2| = |\mathcal{L}_1| + | \mathcal{L}_2| - |\mathcal{L}_1 \cap \mathcal{L}_2|$, noting
that $|\mathcal{L}_2| = q^3 + q^2 + q + 1$ in all cases.
\end{proof}

\section{Computer classifications}
\label{sec:computer}

In this section we describe an exhaustive search for $1$-good structures of the generalized quadrangles $\sympl(3,3)$, $\sympl(3,4)$ and $\sympl(3,5)$ and for the $1$-good structures in
$\sympl(3,7)$ that have a non-trivial automorphism group.

For many combinatorial problems in finite geometry there is a natural hierarchical property (that is, closed under taking subsets) for which the desired objects are just the extremal objects with that property. For example, in searching for hyperovals in a projective plane of order $q$, the property ``no $3$ points collinear'' is hierarchical. Therefore a backtrack search can proceed simply by adding points to the ``partial ovals'' until the desired size ($q+2$) is reached. This general situation has been heavily studied and there is a wide range of techniques for improving the efficiency of such a search, and exploiting symmetry to reduce the generation of isomorphic objects.

The situation for $1$-good structures is somewhat different in that there is no obvious property that can play a similar role --- it is not clear what the defining property of a ``partial $1$-good structure" should be. The naive choice of constructing sets of points and lines that induce a subgraph of maximum degree at most $q$ makes so little use of the constrained structure of a $q$-regular subgraph that it is essentially useless. 

However it is clear that when the search process branches by making a decision on a particular element (i.e., the first branch assumes the element is in, while the second assumes it is out), then there is a cascade of consequences affecting the other elements. This feature of the problem
led us to consider the \emph{constraint satisfaction programming} (CSP) paradigm. In a CSP problem, the user creates a \emph{declarative model} of a problem where the desired configurations are defined by a collection of \emph{constraints} that all solutions must satisfy. This is declarative (rather than \emph{imperative}) because the user just states the desired outcome but leaves the details of the search procedure to the CSP solver. The CSP solver is optimised precisely for the task of \emph{constraint propagation} thereby chasing down the effect of the entire cascade of consequences arising from the decision at each branch. Experience in related combinatorial searches has 
shown that in certain situations a general-purpose CSP solver can outperform even very heavily optimised bespoke programs.

In practice, for our computations we used the CSP solver {\sc Minion} \cite{minion} (mostly for familiarity, we have no reason to believe that it is either better or worse than other CSP solvers for this problem). The model contains two boolean arrays (one for points, one for lines) so that the variable corresponding to a point is true if the point is in the $1$-good structure and false otherwise. The constraints all have the form ``If $P$ is \emph{not in} the $1$-good structure then exactly one line through $P$ \emph{is in} the $1$-good structure'' (and the duals involving lines). This of course relies on the ability of one constraint to involve the ``status'' of another constraint. This ability is called \emph{reification} which, loosely speaking, allows a constraint to have the form ``If $A$ is satisfied, then $B$ must be satisfied'' where $A$ and $B$ are themselves constraints. A solution to the entire CSP may leave $A$ unsatisfied, in which case $B$ may or may not be satisfied without violating the {\tt reifyimply} constraint ({\tt reifyimply} is a {\sc Minion} construct).

One advantage of using CSP is that the model itself is almost trivial to write, while the complicated part (chasing consequences) is handed over to a program that has many users and has been maintained for many years, dramatically reducing the chances that it harbours a bug  sufficiently serious to compromise our results. 
Of course, as described, this CSP program will find numerous unnecessary isomorphs of \emph{every} $1$-good structure and so we also used a number of standard symmetry-breaking techniques such as specifying that particular small subsets of elements are in (or not in) the $1$-good structure. As these are entirely routine, we do not describe them further here.

When searching for $1$-good structures with specified automorphism group, say $H$, the model is very similar, except that there is one boolean variable for each \emph{orbit} of $H$. The constraints then must accommodate the possibility that each point in a point-orbit may be incident with more than one line in a line-orbit, and dually, but again this is a simple modification of the basic program.

We have provided the GAP code for this computation in the Appendix of this paper. 

\subsection{Tables}

The next four tables include summary information about the $q$-regular induced subgraphs (and
hence $1$-good structures) found in the generalized quadrangles $\sympl(3,q)$ for $q \in \{3,4,5,7\}$. For $q < 7$, the tables provide a complete listing of every $q$-regular subgraph / $1$-good structure in the
generalized quadrangle. For $q=7$ the computer searches assumed the existence of a non-trivial stabiliser group and so may miss those whose stabiliser group is trivial (although we do not believe there are any in $\sympl(3,7)$). Each line gives the size of a $q$-regular subgraph, the
order of its stabiliser (in the collineation group of $\sympl(3,q)$) and the orbits of this
stabiliser on both the $q$-regular subgraph and its complementary $1$-good structure.
The fifth column indicates when the example arises from Theorem  \ref{thm:construction}.
We note that some of the examples that do not arise from Theorem \ref{thm:construction}, are covered by the constructions given in \cite{BM11}. 
\renewcommand{\arraystretch}{1.15}

\begin{table}[!ht]
\begin{center}
\begin{tabular}{lrllc}
\toprule
\multicolumn{1}{c}{Size} & \multicolumn{1}{c}{Stabiliser size} & \multicolumn{1}{c}{Orbits (on subgraph)} & \multicolumn{1}{c}{Orbits (on $1$-good structure)}&From \ref{thm:construction}\\
\midrule
$36$ & $24$ & $\{2,4,6,12^{2}\}$&$\{1,2^{3},3,4,6,12^{2}\}$\\
$40$ & $12$ & $\{2^{2},6^{4},12\}$&$\{1^{5},2,3^{3},6^{4}\}$\\
$40$ & $240$ & $\{20^{2}\}$&$\{10^{2},20\}$\\
$42$ & $12$ & $\{1,2,3,6^{2},12^{2}\}$&$\{1^{2},3^{4},6^{4}\}$\\
$42$ & $36$ & $\{3,9,12,18\}$&$\{1^{3},2,3,6^{2},9^{2}\}$&\checkmark \\
$48$ & $36$ & $\{6^{2},18^{2}\}$&$\{1^{4},2^{2},3^{2},9^{2}\}$&\checkmark\\
$48$ & $36$ & $\{6^{2},18^{2}\}$&$\{1,2^{2},6^{3},9\}$\\
$48$ & $144$ & $\{24^{2}\}$&$\{1,3,4^{2},8,12\}$&\checkmark \\
$54$ & $36$ & $\{3,6,9,18^{2}\}$&$\{1^{3},2,3^{3},6^{2}\}$\\
$54$ & $36$ & $\{3,6,9,18^{2}\}$&$\{1^{3},2,3,9^{2}\}$\\
$54$ & $324$ & $\{27^{2}\}$&$\{1^{2},3^{2},9^{2}\}$&\checkmark\\
$56$ & $48$ & $\{4,12,16,24\}$&$\{2,4^{2},6,8\}$\\
$56$ & $48$ & $\{4,12,16,24\}$&$\{1,3,4,8^{2}\}$\\
\bottomrule
\end{tabular}
\end{center}
\caption{All induced $3$-regular subgraphs in $\sympl(3,3)$}
\end{table}

\begin{table}[!ht]
\begin{center}
\begin{tabular}{lrllc}
\toprule
\multicolumn{1}{c}{Size} & \multicolumn{1}{c}{Stabiliser size} & \multicolumn{1}{c}{Orbits (on subgraph)} & \multicolumn{1}{c}{Orbits (on $1$-good structure)}&From \ref{thm:construction}\\
\midrule
$100$ & $240$ & $\{10,20,30,40\}$&$\{5,10,15,20^{2}\}$\\
$100$ & $400$ & $\{50^{2}\}$&$\{10^{2},25^{2}\}$\\
$104$ & $96$ & $\{4,12,16,24,48\}$&$\{1^{3},3,4^{2},12,16,24\}$&\checkmark\\
$108$ & $144$ & $\{18^{2},36^{2}\}$&$\{4^{2},6^{2},9^{2},12^{2}\}$\\
$112$ & $192$ & $\{8,24,32,48\}$&$\{1,2,3,6^{2},16,24\}$&\checkmark\\
$112$ & $192$ & $\{8,24,32,48\}$&$\{1^{2},2^{2},4,8^{2},16^{2}\}$&\checkmark\\
$120$ & $96$ & $\{4,8,12,16,32,48\}$&$\{1^{3},3,4^{5},8,16\}$\\
$120$ & $120$ & $\{30^{2},60\}$&$\{2,3,5,10,15^{2}\}$\\
$120$ & $288$ & $\{12^{2},48^{2}\}$&$\{1^{4},3^{2},4^{2},16^{2}\}$&\checkmark\\
$120$ & $1440$ & $\{60^{2}\}$&$\{1,4,5^{2},15,20\}$&\checkmark\\
$128$ & $192$ & $\{16,48,64\}$&$\{1^{2},4^{2},16^{2}\}$\\
$128$ & $288$ & $\{4,12,16,48^{2}\}$&$\{1^{3},3,4^{3},12^{2}\}$\\
$128$ & $384$ & $\{16,48,64\}$&$\{1,2,3,8,12,16\}$\\
$128$ & $4608$ & $\{64^{2}\}$&$\{1^{2},4^{2},16^{2}\}$&\checkmark\\
$136$ & $136$ & $\{68^{2}\}$&$\{17^{2}\}$\\
\bottomrule
\end{tabular}
\end{center}
\caption{All induced $4$-regular subgraphs in $\sympl(3,4)$}
\end{table}

\begin{table}[!ht]
\begin{center}
\begin{tabular}{lrllc}
\toprule
\multicolumn{1}{c}{Size} & \multicolumn{1}{c}{Stabiliser size} & \multicolumn{1}{c}{Orbits (on subgraph)} & \multicolumn{1}{c}{Orbits (on $1$-good structure)}&From \ref{thm:construction}\\
\midrule
$230$ & $200$ & $\{5,10,25,40,50,100\}$&$\{1^{3},2^{2},5,10^{2},25^{2}\}$&\checkmark\\
$240$ & $100$ & $\{5^{4},20,25^{4},100\}$&$\{1^{3},4,5^{8},25\}$\\
$240$ & $200$ & $\{10^{2},20,50^{2},100\}$&$\{1,2,4,10^{4},25\}$\\
$240$ & $400$ & $\{20^{2},100^{2}\}$&$\{1,2,4,10^{2},20,25\}$\\
$240$ & $400$ & $\{20^{2},100^{2}\}$&$\{1^{4},4^{2},5^{2},25^{2}\}$&\checkmark\\
$240$ & $2400$ & $\{120^{2}\}$&$\{1,5,6^{2},24,30\}$&\checkmark\\
$250$ & $200$ & $\{5,10^{2},25,50^{2},100\}$&$\{1^{3},2^{2},5,25^{2}\}$\\
$250$ & $200$ & $\{5,10^{2},25,50^{2},100\}$&$\{1^{3},4,5^{3},10^{2},20\}$\\
$250$ & $400$ & $\{5,20,25,100^{2}\}$&$\{1^{3},4,5^{3},20^{2}\}$\\
$250$ & $10000$ & $\{125^{2}\}$&$\{1^{2},5^{2},25^{2}\}$&\checkmark\\
$252$ & $96$ & $\{6^{2},24^{2},96^{2}\}$&$\{1^{2},4,6,24^{2}\}$\\
\bottomrule
\end{tabular}
\end{center}
\caption{All induced $5$-regular subgraphs in $\sympl(3,5)$}
\end{table}

\begin{table}[!ht]
\begin{center}
\begin{tabular}{lrllc}
\toprule
\multicolumn{1}{c}{Size} & \multicolumn{1}{c}{Stabiliser size} & \multicolumn{1}{c}{Orbits (on subgraph)} & \multicolumn{1}{c}{Orbits (on $1$-good structure)}&From \ref{thm:construction}\\
\midrule
$658$ & $588$ & $\{7,14^{2},49,84,98^{2},294\}$&$\{1^{3},2^{3},7,14^{2},49^{2}\}$&\checkmark\\
$672$ & $294$ & $\{7^{6},42,49^{6},294\}$&$\{1^{3},6,7^{10},49\}$\\
$672$ & $294$ & $\{7^{6},42,49^{6},294\}$&$\{1^{3},6,7^{10},49\}$\\
$672$ & $294$ & $\{7^{6},42,49^{6},294\}$&$\{1^{3},6,7^{10},49\}$\\
$672$ & $294$ & $\{7^{6},42,49^{6},294\}$&$\{1^{3},6,7^{10},49\}$\\
$672$ & $588$ & $\{14^{3},42,98^{3},294\}$&$\{1,2,6,14^{5},49\}$\\
$672$ & $588$ & $\{14^{3},42,98^{3},294\}$&$\{1^{3},6,7^{4},14^{3},49\}$\\
$672$ & $1764$ & $\{42^{2},294^{2}\}$&$\{1^{4},6^{2},7^{2},49^{2}\}$&\checkmark\\
$672$ & $1764$ & $\{42^{2},294^{2}\}$&$\{1,2,6,14^{2},42,49\}$\\
$672$ & $14112$ & $\{336^{2}\}$&$\{1,7,8^{2},48,56\}$&\checkmark\\
$686$ & $588$ & $\{7,14^{3},49,98^{3},294\}$&$\{1^{3},6,7^{3},14^{3},42\}$\\
$686$ & $588$ & $\{7,14^{3},49,98^{3},294\}$&$\{1^{3},6,7^{3},14^{3},42\}$\\
$686$ & $588$ & $\{7,14^{3},49,98^{3},294\}$&$\{1^{3},2^{3},7,49^{2}\}$\\
$686$ & $882$ & $\{7,21^{2},49,147^{2},294\}$&$\{1^{3},6,7^{3},21^{2},42\}$\\
$686$ & $1764$ & $\{7,42,49,294^{2}\}$&$\{1^{3},6,7^{3},42^{2}\}$\\
$686$ & $86436$ & $\{343^{2}\}$&$\{1^{2},7^{2},49^{2}\}$&\checkmark\\
$688$ & $144$ & $\{8^{2},24^{2},48,144^{4}\}$&$\{1^{2},3^{2},8,48^{2}\}$\\
\bottomrule
\end{tabular}
\end{center}
\caption{Induced $7$-regular subgraphs in $\sympl(3,7)$ with nontrivial stabiliser}
\end{table}

\section{Concluding remarks}
We have improved the upper bounds on the cage number $c(k, 8)$ when $k$ is an even power of a prime, and on $c(k, 12)$ when $k$ is an arbitrary prime power. 
Of course, we also need $k - 1$ to not be a prime power, as otherwise $c(k, g)$ is equal to the Moore bound. 
For $g = 8$, the smallest value of $k$ for which we have an improvement in the upper bounds is $k = 16$, and for $g = 12$ the smallest value is $k = 11$. 
While the tables in Section \ref{sec:computer} show that the known general constructions in $\sympl(3,q)$ (including Theorem \ref{thm:construction}) are best possible for $q \in \{3, 4, 5, 7\}$, we believe that in general this is not true. 
In both $\sympl(3,q)$ and $\GH(q)$, we found some computer examples that we are unable to explain via general constructions, which suggests that there is still room for improvement in these bounds, at least for some special value of $q$. 
It will also be interesting to explore $1$-good structures in non-classical generalized quadrangles and find better constructions than those in the classical case. 

We have not given any new constructions of $t$-good structures for $t > 1$ in this paper as we believe that it is much more difficult to give geometrical constructions of these objects that lead to improvements in the known bounds. 
Our constructions of Section \ref{sec:const} and \ref{sec:const2} do not seem to generalise for $t > 1$. 
We conclude with the following:

\begin{quote}
\textbf{Open problem}: Find $1$-good structures of size $q^4 + \omega(q^3)$ in $\GH(q)$. 
\end{quote}

\subsection*{Acknowledgements}
The first author acknowledges the support of the Australian Research Council Future Fellowship
FT120100036, which supported the visit of the second author to the University of Western Australia.
We thank the referees for their comments and suggestions that have greatly improved the
exposition of this paper.

%\bibliographystyle{plain}
%\bibliography{reference.bib}

\end{document}